\numberwithin{theorem}{section}
\newcommand{\TheTitle}{Continuity of Pontryagin Extremals with Respect to Delays in Nonlinear Optimal Control}
\newcommand{\TheShortTitle}{Continuity of Pontryagin Extremals with Respect to Delays}
\newcommand{\TheAuthors}{R. Bonalli, B. H\'{e}riss\'{e}, and E. Tr\'{e}lat}
\headers{\TheShortTitle}{\TheAuthors}
\title{{\TheTitle}
}
\author{
  Riccardo Bonalli\thanks{Sorbonne Universit\'e, Universit\'e Paris-Diderot SPC, CNRS, Inria, Laboratoire Jacques-Louis Lions, \'equipe CAGE, F-75005 Paris, France and ONERA, DTIS, Universit\'e Paris Saclay, F-91123 Palaiseau, France (\email{riccardo.bonalli@etu.upmc.fr, riccardo.bonalli@onera.fr})}
  \and
  Bruno H\'{e}riss\'{e}\thanks{ONERA, DTIS, Universit\'e Paris Saclay, F-91123 Palaiseau, France (\email{bruno.herisse@onera.fr}).}
  \and
  Emmanuel Tr\'{e}lat\thanks{Sorbonne Universit\'e, Universit\'e Paris-Diderot SPC, CNRS, Inria, Laboratoire Jacques-Louis Lions, \'equipe CAGE, F-75005 Paris, France (\email{emmanuel.trelat@sorbonne-universite.fr}).}
}
\newsavebox{\leftbox} \newsavebox{\rightbox}%
\NewDocumentCommand{\lrboxbrace}{s O{\{} O{\}} O{0.05\linewidth} m O{0.8\linewidth} m}{
\begin{lrbox}{\leftbox}
\IfBooleanTF{#1}
{\begin{varwidth}{#4}#5\end{varwidth}}
{\begin{minipage}{#4}#5\end{minipage}}
\end{lrbox}
\begin{lrbox}{\rightbox}
\IfBooleanTF{#1}
{\begin{varwidth}{#6}#7\end{varwidth}}
{\begin{minipage}{#6}#7\end{minipage}}
\end{lrbox}
\ensuremath{\usebox\leftbox\left#2\,\usebox\rightbox\,\right#3}
}
\begin{document}

\maketitle

\begin{abstract}
Consider a general nonlinear optimal control problem in finite dimension, with constant state and/or control delays. By the Pontryagin Maximum Principle, any optimal trajectory is the projection of a Pontryagin extremal. We establish that, under appropriate assumptions which are essentially sharp, Pontryagin extremals depend continuously on the parameters delays, for adequate topologies. The proof of the continuity of the trajectory and of the control is quite easy, however, for the adjoint vector, the proof requires a much finer analysis. The continuity property of the adjoint vector with respect to the parameter delays opens a new perspective for the numerical implementation of indirect methods, such as the shooting method.
\end{abstract}

\begin{keywords}
Nonlinear optimal control, time-delayed systems, Pontryagin extremals, continuity with respect to delays, shooting method for problems with delays.
\end{keywords}

\begin{AMS}
49J15, 49K15, 49K40.
\end{AMS}

\section{Introduction} \label{secIntro}

This paper is devoted to establishing continuity properties with respect to delays of Pontryagin extremals related to nonlinear optimal control problems with state and control constant delays. Pontryagin extremals are obtained by applying the Pontryagin Maximum Principle to an optimal control problem, thus providing first-order necessary conditions for optimality.

Historically, the Maximum Principle has been developed originally for optimal control problems without delays (see, e.g., \cite{pontryagin1987}). The paper \cite{kharatishvili1961} was first to provide a Maximum Principle for optimal control problems with constant state delays while \cite{guinn1976} obtains the same conditions by a simple substitution-like method. In \cite{kharatishvili1967} a similar result is achieved for control problems with pure control delays. In \cite{halanay1968, soliman1972}, necessary conditions are obtained for optimal control problems with multiple constant delays in state and control variables. Moreover, \cite{banks1968,asher1971} derive Maximum Principles for control systems with either time- or state-dependent delays. Finally, \cite{gollmann2009,gollmann2014} give necessary conditions for optimal control problems with delays and mixed constraints.

When delays are considered, the Maximum Principle provides extremals satisfying adjoint equations, maximality condition and transversality conditions which depend directly on the value of the delay. Therefore, it seems legitimate to wonder how these extremals depend on the parameter delay. In the present paper, we provide sufficient conditions ensuring continuity (for suitable topologies) of Pontryagin extremals with respect to delays. Continuity properties have useful numerical applications in solving optimal control problems with delays by shooting methods (as we describe in Section \ref{Sect_IndirectMethods}). \textcolor{black}{The main result presented in this paper is roughly the following:} \\

\makebox[\textwidth][l]{\textit{\parbox[]{78ex}{\textcolor{black}{\small Consider an optimal control problem with delays. Under the main assumption that optimal controls are either time continuous or purely bang-bang, Pontryagin extremals are strongly continuous with respect to delays, for appropriate topologies.}}}}

\newpage

\noindent \textcolor{black}{The assumption that optimal controls are bang-bang is sharp, in the sense that, whenever singular arcs arise, continuity of Pontryagin extremals with respect to delays may fail to be satisfied in strong topology, although it is always satisfied in a weak sense (a counterexample is provided in Section \ref{counterEx}).}

In the literature, to our knowledge, it seems that this topic has been little addressed. The main works addressing the regularity of extremals with respect to time lag variations develop sensibility analysis-type arguments (see, e.g., \cite{rihan2003,marchuk2013,marchuk2013mathematical}). Our approach developed in this paper does not require any differentiability properties of the extremals. More precisely, the main continuity result (see Theorem \ref{Ch5_TheoMain}) is achieved by analyzing the geometric deformation of Pontryagin cones, i.e., the sets containing all variation vectors, under small perturbations of the delays. This geometric analysis is challenging and requires a modified conic implicit function theorem which relies on the continuous dependence of parameters (represented here by delays).

The paper is organized as follows. In Section \ref{Sect_First}, we recall the Maximum Principle formulation for optimal control problems with delays, stating then, the main theorem for the continuity of Pontryagin extremals with respect to delays. Section \ref{Sect_IndirectMethods} contains an extension of such continuity property to implement robust shooting methods to solve optimal control problems with delays. In Section \ref{Sect_Proof}, we provide the proof of our main result, which goes in three steps. Robustness of controllability properties of problems with delays are addressed first by means of an implicit function theorem in which parameters and restriction to dense subsets are considered. In a second step, the existence of solutions of optimal control problems with delays and their continuity with respect to delays are established. The last step is the more difficult and technical: we prove the continuity of the adjoint vectors with respect to delays. Finally, Section \ref{Sect_Conclus} provides conclusions and several perspectives.

\section{Continuity of Pontryagin Extremals with Respect to Delays} \label{Sect_First}

\subsection{The Maximum Principle for Optimal Control Problems with Delays} \label{Chapter5_Preliminaries}

Let $n$, $m$ be positive integers, $\Delta$ a positive real number, $U \subseteq \mathbb{R}^m$ a measurable subset and define an initial state function $\phi^1(\cdot) \in C^0([-\Delta,0],\mathbb{R}^n)$ and an initial control function $\phi^2(\cdot) \in L^{\infty}([-\Delta,0],U)$. For $\tau = (\tau^0,\tau^1,\tau^2) \in [0,\Delta]^3$ and $t_f > 0$, consider the following nonlinear control system on $\mathbb{R}^n$ with constant delays
\begin{equation} \label{Ch5_Dyn}
\begin{cases}
\dot{x}(t) = f(t,t-\tau^0,x(t),x(t-\tau^1),u(t),u(t-\tau^2)) \quad , \quad t \in [0,t_f] \medskip \\
x(t) = \phi^1(t) \ , \ u(t) = \phi^2(t) \ , \ t \in [-\Delta,0] \quad , \quad u(\cdot) \in L^{\infty}([-\Delta,t_f],U)
\end{cases}
\end{equation}
where $f(t,s,x,y,u,v)$ is continuous and (at least) $C^2$ w.r.t. its second, third and fourth variables. Control systems (\ref{Ch5_Dyn}) play an important role describing many relevant phenomena in physics, biology, engineering and economics (see, e.g., \cite{malek1987}).

Let $M_f$ be a subset of $\mathbb{R}^{n}$. Assume that $M_f$ is reachable from $\phi^1(\cdot)$ for the control system (\ref{Ch5_Dyn}), that is, for every $\tau = (\tau^0,\tau^1,\tau^2) \in [0,\Delta]^3$, there exist a final time $t_f$ and a control $u(\cdot) \in L^{\infty}([-\Delta,t_f],U)$, such that the trajectory $x(\cdot)$, solution of (\ref{Ch5_Dyn}) in $[-\Delta,t_f]$, satisfies $x(t_f) \in M_f$. Such a control is called admissible and we denote by $\mathcal{U}^{\tau}_{t_f,\mathbb{R}^m}$ the set of all admissible controls of (\ref{Ch5_Dyn}) defined in $[-\Delta,t_f]$ taking their values in $\mathbb{R}^m$, while $\mathcal{U}^{\tau}_{t_f,U}$ denotes the set of all admissible controls of (\ref{Ch5_Dyn}) defined in $[-\Delta,t_f]$ taking their values in $U$. Therefore, $\mathcal{U}^{\tau}_{\mathbb{R}^m} = \bigcup_{t_f > 0} \mathcal{U}^{\tau}_{t_f,\mathbb{R}^m}$ and $\mathcal{U}^{\tau}_{U} = \bigcup_{t_f > 0} \mathcal{U}^{\tau}_{t_f,U}$.

Given constant delays $\tau = (\tau^0,\tau^1,\tau^2) \in [0,\Delta]^3$, we consider the Optimal Control Problem with Delays (\textbf{OCP})$_{\tau}$ consisting in steering the control system (\ref{Ch5_Dyn}) to $M_f$, while minimizing the cost function
\begin{equation} \label{Ch5_Cost}
C_{\tau}(t_f,u) = \int_{0}^{t_f} f^0(t,t-\tau^0,x(t),x(t-\tau^1),u(t),u(t-\tau^2)) \; \mathrm{d}t
\end{equation}
where $f^0(t,s,x,y,u,v)$ is continuous and (at least) $C^2$ w.r.t. its second, third and fourth variables. We study either fixed or free final time problems (\textbf{OCP})$_{\tau}$.

In the context of the present work, we focus on two particular classes of problems. We speak of problems (\textbf{OCP})$_{\tau}$ with pure state delays when $f$ and $f^0$ do not depend on $v$, i.e., $\frac{\partial f}{\partial v} = \frac{\partial f^0}{\partial v} = 0$. We say that the optimal control problem (\textbf{OCP})$_{\tau}$ is affine when $f$ and $f^0$ are affine in $(u,v)$, i.e.,
\begin{equation} \label{def_affine}
\begin{split}
f(t,s,x,y,u,v) &= f_0(t,s,x,y) + u \cdot f_1(t,s,x,y) + v \cdot f_2(t,s,x,y) \\
f^0(t,s,x,y,u,v) &= f_0^0(t,s,x,y) + u \cdot f_1^0(t,s,x,y) + v \cdot f_2^0(t,s,x,y) .
\end{split}
\end{equation}

Assume that $(x_{\tau}(\cdot),u_{\tau}(\cdot))$ is an optimal solution for (\textbf{OCP})$_{\tau}$ with related optimal final time $t^{\tau}_f$ and define the Hamiltonian related to problem (\textbf{OCP})$_{\tau}$ by
$$
H(t,s,x,y,p,p^0,u,v) = \langle p , f(t,s,x,y,u,v) \rangle + p^0 f^0(t,s,x,y,u,v) .
$$
According to the Maximum Principle (see, e.g., \cite{gollmann2014,boccia2016}), there exists a nontrivial couple $(p_{\tau}(\cdot),p^0_{\tau}) \neq 0$, where $p^0_{\tau} \leqslant 0$ is constant and $p_{\tau} : [0,t^{\tau}_f] \rightarrow \mathbb{R}^n$ (adjoint vector) is absolutely continuous, such that the so-called Pontryagin extremal $(x_{\tau}(\cdot),p_{\tau}(\cdot),p^0_{\tau},u_{\tau}(\cdot))$ satisfies, almost everywhere in $[0,t^{\tau}_f]$, the adjoint equations
\begingroup
\footnotesize
\begin{eqnarray} \label{Ch5_Adjoint}
\hspace{10pt}
\begin{cases}
\displaystyle \dot{x}_{\tau}(t) =& \displaystyle \frac{\partial H}{\partial p}(t,t-\tau^0,x_{\tau}(t),x_{\tau}(t-\tau^1),p_{\tau}(t),p^0_{\tau},u_{\tau}(t),u_{\tau}(t-\tau^2)) \medskip \\
\displaystyle \dot{p}_{\tau}(t) =& \displaystyle -\frac{\partial H}{\partial x}(t,t-\tau^0,x_{\tau}(t),x_{\tau}(t-\tau^1),p_{\tau}(t),p^0_{\tau},u_{\tau}(t),u_{\tau}(t-\tau^2)) \medskip \\
&\displaystyle -\mathds{1}_{[0,t^{\tau}_f-\tau^1]}(t) \frac{\partial H}{\partial y}(t+\tau^1,t+\tau^1-\tau^0,x_{\tau}(t+\tau^1),x_{\tau}(t),p_{\tau}(t+\tau^1), \medskip \\
&\displaystyle \hspace{75pt} p^0_{\tau},u_{\tau}(t+\tau^1),u_{\tau}(t+\tau^1-\tau^2))
\end{cases}
\end{eqnarray}
\endgroup
and the following maximality condition, for every $u \in U$
\begin{equation} \label{Ch5_Max}
\footnotesize
\begin{split}
\displaystyle H(t,t-\tau^0,x_{\tau}(t),&x_{\tau}(t-\tau^1),p_{\tau}(t),p^0_{\tau},u_{\tau}(t),u_{\tau}(t-\tau^2))(t) \medskip \\
\displaystyle &+ \mathds{1}_{[0,t^{\tau}_f-\tau^2]}(t) H(t+\tau^2,t+\tau^2-\tau^0,x_{\tau}(t+\tau^2),x_{\tau}(t+\tau^2-\tau^1), \medskip \\
&\hspace{75pt} p_{\tau}(t+\tau^2),p^0_{\tau},u_{\tau}(t+\tau^2),u_{\tau}(t)) \medskip \\
\displaystyle \geqslant H(t,t-\tau^0,x_{\tau}&(t),x_{\tau}(t-\tau^1),p_{\tau}(t),p^0_{\tau},u,u_{\tau}(t-\tau^2)) \medskip \\
\displaystyle &+ \mathds{1}_{[0,t^{\tau}_f-\tau^2]}(t) H(t+\tau^2,t+\tau^2-\tau^0,x_{\tau}(t+\tau^2),x_{\tau}(t+\tau^2-\tau^1), \medskip \\
&\hspace{75pt} p_{\tau}(t+\tau^2),p^0_{\tau},u_{\tau}(t+\tau^2),u) .
\end{split}
\end{equation}
Furthermore, if $M_f$ is a submanifold of $\mathbb{R}^n$, locally around $x_{\tau}(t^{\tau}_f)$, then the adjoint vector can be chosen in order to satisfy
\begin{eqnarray} \label{Ch5_FinalCondPerp}
p_{\tau}(t^{\tau}_f) \quad \perp \quad T_{x_{\tau}(t^{\tau}_f)} M_f
\end{eqnarray}
and, moreover, if the final time $t^{\tau}_f$ is free and both $t^{\tau}_f$, $t^{\tau}_f - \tau^2$ are Lebesgue points for $u_{\tau}(\cdot)$, the extremal $(x_{\tau}(\cdot),p_{\tau}(\cdot),p^0_{\tau},u_{\tau}(\cdot))$ satisfies the following final condition
\begin{eqnarray} \label{Ch5_FinalCond}
H(t^{\tau}_f,t^{\tau}_f-\tau^0,x_{\tau}(t^{\tau}_f),x_{\tau}(t^{\tau}_f - \tau^1),p_{\tau}(t^{\tau}_f),p^0_{\tau},u_{\tau}(t^{\tau}_f),u_{\tau}(t^{\tau}_f - \tau^2)) = 0
\end{eqnarray}
(recall that any measurable function is a.e. approximately continuous, see, e.g., \cite{evans1991}). When $t_f^\tau$ or $t_f^\tau-\tau^2$ are not Lebesgue points, (\ref{Ch5_FinalCond}) can be generalized (see, e.g., \cite{boccia2016}). The extremal $(x_{\tau}(\cdot),p_{\tau}(\cdot),p^0_{\tau},u_{\tau}(\cdot))$ is said to be normal when $p^0_{\tau} \neq 0$, and in that case we set $p^0_{\tau} = -1$. Otherwise, it is said to be abnormal.

\subsection{The Main Result: Continuity Properties of Pontryagin Extremals} \label{Sect_Teo}

Our main result establishes that, under appropriate assumptions, Pontryagin extremals are continuous with respect to delays for appropriate topologies. The most challenging issue is the continuous dependence of adjoint vectors with respect to delays. To prove this fact, we establish continuity with respect to delays of Pontryagin cones related to the Maximum Principle formulation with delays.

We will treat separately the case of pure state delays. Treating control delays happens to be more complex, especially, for the existence of optimal controls (see Theorem \ref{Ch5_TheoMain}). Indeed, a standard approach to prove existence would consider usual Filippov's assumptions (as in the classical reference \cite{filippov1962}) which, in the case of control delays, must be extended. In particular, using the Guinn's reduction (see, e.g., \cite{guinn1976}), the control system with delays is equivalent to a non-delayed system with a larger number of variables depending on the value of $\tau^2$. Such extension was used in \cite{nababan1979}. However, the usual assumption on the convexity of the epigraph of the extended dynamics is not sufficient to prove Lemma 2.1 in \cite{nababan1979} (see also Section \ref{Ch6_ExistenceSect}).

Fix constant delays $\tau_0 = (\tau^0_0,\tau^1_0,\tau^2_0) \in [0,\Delta]^3$ and let $(x_{\tau_0}(\cdot),p_{\tau_0}(\cdot),p^0_{\tau_0},u_{\tau_0}(\cdot))$ be a Pontryagin extremal for (\textbf{OCP})$_{\tau_0}$ satisfying (\ref{Ch5_Adjoint})-(\ref{Ch5_FinalCond}), where $(x_{\tau_0}(\cdot),u_{\tau_0}(\cdot))$ is an optimal solution for (\textbf{OCP})$_{\tau_0}$ in $[-\Delta,t^{\tau_0}_f]$. We make the following assumptions: \\

\textbf{General assumptions:} \\

\hspace{-30pt} \lrboxbrace[\lbrace][.][15pt]{$(A)$}[345pt]
{
\begin{enumerate}
\item[$(A_1)$] \begingroup \small $U$ is compact and convex in $\mathbb{R}^m$, and $M_f$ is a compact submanifold of $\mathbb{R}^n$. \endgroup
\item[$(A_2)$] The optimal control problem with delays (\textbf{OCP})$_{\tau_0}$ has a unique solution, denoted $(x_{\tau_0}(\cdot),u_{\tau_0}(\cdot))$, defined in a neighborhood of $[-\Delta,t^{\tau_0}_f]$.
\item[$(A_3)$] The optimal trajectory $x_{\tau_0}(\cdot)$ has a unique extremal lift (up to a multiplicative scalar) defined in $[0,t^{\tau_0}_f]$, which is normal, denoted $(x_{\tau_0}(\cdot),p_{\tau_0}(\cdot),-1,u_{\tau_0}(\cdot))$, solution of the Maximum Principle.
\item[$(A_4)$] There exists a positive real number $b$ such that, for every $\tau = (\tau^0,\tau^1,\tau^2) \in [0,\Delta]^3$ and every $v(\cdot) \in \mathcal{U}^\tau_{U}$, denoting $x_{\tau,v}(\cdot)$ the related trajectory arising from dynamics (\ref{Ch5_Dyn}) with final time $t^{\tau,v}_f$, one has
$$
\forall \ t \in [-\Delta,t^{\tau,v}_f] : \ t^{\tau,v}_f + \| x_{\tau,v}(t) \| \leqslant b .
$$
\end{enumerate}
}

\vspace{10pt}

\textbf{Additional assumptions in case of pure state delays:} \\

\hspace{-30pt} \lrboxbrace[\lbrace][.][15pt]{$(B)$}[345pt]
{
\begin{enumerate}
\item[$(B_1)$] For every $\tau$, every optimal control $u_{\tau}(\cdot)$ of (\textbf{OCP})$_{\tau}$ is continuous.
\item[$(B_2)$] The sets
\begingroup
\footnotesize
$$
\left\{ \ \left(f(t,s,x,y,u),f^0(t,s,x,y,u)+\gamma\right) \ : \ u \in U \ , \ \gamma \geqslant 0 \ \right\} ,
$$
$$
\hspace{-2.5ex} \left\{ \left(f(t,t,x,x,u),f^0(t,t,x,x,u)+\gamma,\frac{\partial \tilde f}{\partial x}(t,t,x,x,u),\frac{\partial \tilde f}{\partial y}(t,t,x,x,u)\right) : u \in U , \gamma \geqslant 0 \right\}
$$
\endgroup
are convex for every $t,s \in \mathbb{R}$, $x,y \in \mathbb{R}^n$, where we denote $\tilde f = (f,f^0)$.
\end{enumerate}
}

\vspace{10pt}

\textbf{Additional assumptions in case of delays in state and control variables:} \\

\hspace{-30pt} \lrboxbrace[\lbrace][.][15pt]{$(C)$}[345pt]
{
\textcolor{black}{\begin{enumerate}
\item[$(C_1)$] The considered optimal control problems are affine, i.e., we have \eqref{def_affine}.
\item[$(C_2)$] The final time $t_f$ is fixed.
\item[$(C_3)$] At least one of the following two conditions is satisfied: \begin{itemize}
\item For every $\tau$, every optimal control $u_{\tau}(\cdot)$ of (\textbf{OCP})$_{\tau}$ is continuous.
\item The control $u_{\tau_0}(\cdot)$ takes its values at extremal points of $U$, a.e.
\end{itemize}
\end{enumerate}}
}

\begin{theorem} \label{Ch5_TheoMain}
\textcolor{white}{ }

\begin{itemize}
\item \textbf{Optimal control problems with pure state delays:}

Denote $B^+_{\varepsilon}(\tau) = B_{\varepsilon}(\tau) \cap \mathbb{R}^2_+$. Under Assumptions $(A)$ and $(B)$:
\begin{enumerate}
\item There exists $\varepsilon_0 > 0$ such that, for every couple of delays $\tau = (\tau^0,\tau^1)$ satisfying $\| \tau - \tau_0 \| < \varepsilon_0$, each problem (\textbf{OCP})$_{\tau}$ has at least one optimal solution $(x_{\tau}(\cdot),u_{\tau}(\cdot))$ in $[-\Delta,t^{\tau}_f]$, every extremal lift of which is normal. Moreover, if the final time is fixed, then $t^{\tau}_f = t^{\tau_0}_f$ for every $\tau$.
\textcolor{black}{\item The mappings $B^+_{\varepsilon_0}(\tau_0) \ni \tau \mapsto x_\tau(\cdot)\footnote{\textcolor{black}{We notice that this correspondence is not necessarily uniquely defined. Hovewer, thanks to the previous existence statement, at least one such correspondence exists in a neighborhood of $\tau_0$ and the continuity properties hold for every such correspondence given in the following.}}$ and $B^+_{\varepsilon_0}(\tau_0) \ni \tau \mapsto p_\tau(\cdot)$ are continuous at $\tau_0$ in $C^0$ topology, and $B^+_{\varepsilon_0}(\tau_0) \ni \tau \mapsto t^\tau_f$ is continuous~at~$\tau_0$.
\item In addition, the mapping $B^+_{\varepsilon_0}(\tau_0) \ni \tau \mapsto \dot{x}_{\tau}(\cdot)$ is continuous at $\tau_0$ for the $L^\infty$ weak star topology.}
\end{enumerate}

\vspace{3pt}

\item \textbf{Optimal control problems with state and control delays:}

Denote $B^+_{\varepsilon}(\tau) = B_{\varepsilon}(\tau) \cap \mathbb{R}^3_+$. Under Assumptions $(A)$ and $(C)$:
\begin{enumerate}
\item There exists $\varepsilon_0 > 0$ such that, for every triple of delays $\tau = (\tau^0,\tau^1,\tau^2)$ satisfying $\| \tau - \tau_0 \| < \varepsilon_0$, each problem (\textbf{OCP})$_{\tau}$ has at least one optimal solution $(x_{\tau}(\cdot),u_{\tau}(\cdot))$ in $[-\Delta,t_f]$, every extremal lift of which is normal. \vspace{-10pt}
\textcolor{black}{\item The mappings $B^+_{\varepsilon_0}(\tau_0) \ni \tau \mapsto x_\tau(\cdot)$ and $B^+_{\varepsilon_0}(\tau_0) \ni \tau \mapsto p_\tau(\cdot)$ are continuous at $\tau_0$ in $C^0$ topology.
\item In addition, the mapping $B^+_{\varepsilon_0}(\tau_0) \ni \tau \mapsto (u_{\tau}(\cdot),u_{\tau}(\cdot-\tau^2))$ is continuous at $\tau_0$ for the $L^2$ weak topology. Moreover, if $u_{\tau_0}(\cdot)$ takes its values at extremal points of $U$, then the mapping $B^+_{\varepsilon_0}(\tau_0) \ni \tau \mapsto (u_{\tau}(\cdot),u_{\tau}(\cdot-\tau^2))$ is continuous at $\tau_0$ in $L^{\infty}$ topology.}
\textcolor{black}{\item When $\tau^2 = 0$, the previous conclusions remain valid for problems in free final time, and then, the mapping $B^+_{\varepsilon_0}(\tau_0) \ni \tau \mapsto t^\tau_f$ is continuous at $\tau_0$.}
\end{enumerate}
\end{itemize}
\end{theorem}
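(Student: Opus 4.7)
The plan is to follow the three-step strategy outlined in the introduction: (i) robustness of controllability under small perturbations of the delay vector $\tau$; (ii) existence of optimal solutions for the perturbed problems and $C^0$ continuity of the trajectories; (iii) continuity of the adjoint vectors. For step (i), fix the reference admissible pair $(x_{\tau_0}(\cdot),u_{\tau_0}(\cdot))$ reaching $M_f$ at time $t^{\tau_0}_f$, and consider the end-point map $E(\tau,v)$ obtained by perturbing $u_{\tau_0}$ by finitely many needle variations at Lebesgue times and integrating the delayed dynamics \eqref{Ch5_Dyn} at delay $\tau$. Since $u_{\tau_0}$ is only $L^\infty$, the map $\tau \mapsto x_{\tau,u_{\tau_0}}(\cdot)$ is merely continuous, not differentiable, so the classical implicit function theorem does not apply directly. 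I would instead use a conic implicit function theorem in which $\tau$ enters as a continuous parameter and the needle-variation directions are chosen from a dense subset of Lebesgue points; surjectivity onto a neighborhood of $x_{\tau_0}(t^{\tau_0}_f)$ in $M_f$ follows from the fact that the Pontryagin cone at $\tau_0$ fills $\mathbb{R}^n$ modulo $T_{x_{\tau_0}(t^{\tau_0}_f)}M_f$ (this is exactly the normality assumption $(A_3)$).

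Step (ii) combines the a priori bound $(A_4)$, which gives equi-boundedness of trajectories and an a priori bound on $t_f^\tau$ uniformly in $\tau$, with a Filippov-type compactness argument: the augmented velocity set is convex by $(B_2)$ in the pure state delay case, so any minimizing sequence for $(\textbf{OCP})_\tau$ admits a uniform limit that is admissible and optimal. When control delays are present, the dynamics is reduced \`a la Guinn to a non-delayed system with an enlarged state, and convexity of the reduced velocity set requires the affine structure $(C_1)$ and the fixed final time $(C_2)$; this is precisely why Lemma~2.1 of \cite{nababan1979} needs the strengthened hypotheses listed in $(C)$. Continuity of $\tau \mapsto x_\tau(\cdot)$ at $\tau_0$ is then obtained by contradiction: any sequence $\tau_k \to \tau_0$ for which optimal pairs $(x_{\tau_k},u_{\tau_k})$ do not converge to $(x_{\tau_0},u_{\tau_0})$ admits a subsequential limit that is admissible for $(\textbf{OCP})_{\tau_0}$ and whose cost is $\leqslant C_{\tau_0}(t_f^{\tau_0},u_{\tau_0})$; uniqueness $(A_2)$ forces this limit to coincide with the reference solution, yielding the contradiction.

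Step (iii) is where the main technical work lies. The adjoint vector $p_\tau(\cdot)$ is characterized, up to scaling, as the unique nonzero linear form that separates the endpoint $x_\tau(t_f^\tau)$ from the Pontryagin cone $K_\tau$ of accessible variation vectors at the endpoint, while being normal to $T_{x_\tau(t_f^\tau)}M_f$. The plan is to prove Kuratowski continuity of the multivalued map $\tau \mapsto K_\tau$ at $\tau_0$: upper semicontinuity is a consequence of step (ii) applied to needle-variation families, and lower semicontinuity is the delicate point, since every variation direction generating $K_{\tau_0}$ must be approximated by variation directions valid for the delayed dynamics at $\tau$. This is exactly where $(B_1)$ or $(C_3)$ is used: either continuity of $u_{\tau_0}$ makes every time a Lebesgue point, so needles can be placed at times close to those used at $\tau_0$, or bang-bang structure allows one to encode the control by its switching times and transport them continuously in $\tau$. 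Combining Kuratowski continuity of $K_\tau$ with normality $(A_3)$ and the transversality condition \eqref{Ch5_FinalCondPerp} yields a precompact family of candidate adjoint vectors on $[0,t_f^\tau]$, any limit of which must satisfy the Maximum Principle at $\tau_0$ and therefore, by uniqueness of the normal extremal lift, must coincide with $p_{\tau_0}$. This forces the full family $p_\tau(\cdot)$ to converge uniformly to $p_{\tau_0}(\cdot)$.

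The remaining conclusions follow almost mechanically. Weak $L^2$ (or $L^\infty$ weak-star for $\dot x_\tau$) convergence of the controls is obtained by passing to the limit in \eqref{Ch5_Max} using uniform convergence of $(x_\tau,p_\tau)$; when $u_{\tau_0}$ takes values at extremal points of $U$, the maximized Hamiltonian has a unique maximizer a.e., which upgrades weak convergence to strong $L^\infty$ convergence. Continuity of $t_f^\tau$ in the free-final-time cases ($\tau^2=0$ or pure state delay) comes from writing the shooting equation combining \eqref{Ch5_Adjoint}--\eqref{Ch5_FinalCond} and invoking continuity of the Hamiltonian together with the normality enforced by $(A_3)$. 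The main obstacle will clearly be the lower semicontinuity of the Pontryagin cones, which is the sole place where the sharp assumptions $(B_1)$/$(C_3)$ on the regularity of the optimal control are not negotiable, as the counterexample of Section \ref{counterEx} shows.
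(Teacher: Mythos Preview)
Your three-step outline matches the paper's architecture, and steps (i) and (ii) are essentially on target. There are, however, two genuine issues in step (iii) and in the ordering of the arguments.

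First, a logical dependency is reversed. You propose to deduce weak convergence of the controls \emph{after} step (iii), by passing to the limit in the maximality condition \eqref{Ch5_Max} once uniform convergence of $(x_\tau,p_\tau)$ is in hand. But the proof of lower semicontinuity of the Pontryagin cones---the core of your step (iii)---already requires convergence of the coefficients $\frac{\partial \tilde f}{\partial x}$, $\frac{\partial \tilde f}{\partial y}$ of the variational system \eqref{Ch6_DynVariation} along $(x_\tau,u_\tau)$, and the adjoint equation itself depends on $u_\tau$. In the paper these convergences are secured \emph{within} step (ii): for pure state delays, the second convexity assumption in $(B_2)$ is designed precisely so that the Filippov scheme yields weak-$*$ $L^\infty$ convergence of the derivatives of $\tilde f$ (see \eqref{Ch6_DerDynConv}); for control delays, the affine structure $(C_1)$ gives weak $L^2$ convergence of $u_\tau$ (and of $u_\tau(\cdot-\tau^2)$ via a shift argument) directly. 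Control convergence is thus an \emph{input} to the adjoint analysis, not an output; your ordering is circular.

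Second, the mechanisms you attribute to $(B_1)$ and $(C_3)$ differ from those actually used. Assumption $(B_1)$ asks for continuity of $u_\tau$ for \emph{every} $\tau$, not just of $u_{\tau_0}$: the difficulty is not placing needles at $\tau_0$, but finding for each delayed problem a Lebesgue point $s_\tau$ of $u_\tau$ close to the reference $s$ at which the dynamics converges; this is done by a mean-value argument on the continuous $h^\tau$ (Lemma \ref{Ch6_Lemma1}). As for the bang-bang branch of $(C_3)$, the paper does \emph{not} encode controls by switching times. It uses instead that $u_{\tau_0}$ takes values at extreme points of $U$ to invoke Visintin's theorem \cite{visintin1984}: weak $L^2$ convergence towards an extreme-valued limit is automatically strong $L^1$, hence a.e., and this a.e.\ convergence then feeds Lemma \ref{Ch6_Lemma1}. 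Your ``unique maximizer of the Hamiltonian'' route to the $L^\infty$ upgrade would again presuppose $p_\tau \to p_{\tau_0}$ and so is circular.

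Two minor points: only the lower-semicontinuity direction of $\tau \mapsto \tilde K^\tau$ is needed and proved (Lemma \ref{Ch6_Prop1}); upper semicontinuity plays no role. And continuity of $t_f^\tau$ in the free-final-time cases is obtained already in step (ii) by compactness of $[0,b]$ and uniqueness $(A_2)$, not from the Hamiltonian condition \eqref{Ch5_FinalCond}.
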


Several remarks are in order.

First of all, Assumptions $(A_2)$ and $(A_3)$ on the uniqueness of the solution of (\textbf{OCP})$_{\tau_0}$ and on the uniqueness of its extremal lift are ``generic": they are actually related to the differentiability properties of the value function (see, e.g., \cite{clarke1987,aubin2009,rifford2009}). They are standard in optimization and are just made to keep a nice statement (see Theorem \ref{Ch5_TheoMain}). These assumptions can be weakened as follows. If we replace $(A_2)$ and $(A_3)$ with the assumption ``every extremal lift of every solution of (\textbf{OCP})$_{\tau_0}$ is normal", then, the conclusion provided in Theorem \ref{Ch5_TheoMain} below still holds, except that the continuity properties must be written in terms of closure points (see Section \ref{Ch6_SectMainProof} and \cite[Remark 1.11]{haberkorn2011}). Finally, requiring that the unique extremal of (\textbf{OCP})$_{\tau_0}$ is moreover normal is crucial to prove the continuity of the adjoint vectors w.r.t. delays.

\textcolor{black}{Assumptions $(B_1)$ and $(C_3)$ play a complementary role in proving continuity for the adjoint vectors (see Section \ref{Ch6_ConvAdjoint}). They are related to the strict Legendre-Clebsch condition and the uniqueness of solutions for (\ref{Ch5_Max}), and, even if these assumptions seem to be restrictive, many problems in applications satisfy them and examples can be found in \cite{gollmann2009,gollmann2014,rihan2003,bonalli2017,silva2017,silva2018,gollmann2018}. In particular, Assumption $(C_3)$ is instrumentally used also to derive strong continuity of controls from weak continuity when there are delays on the control, because of the following general fact. Let $X$, $Y$ be Banach spaces and $F : X \rightarrow Y$ be a continuous map. Suppose that $(x_k)_{k \in \mathbb{N}} \subseteq X$ is a sequence such that $x_k \rightharpoonup x$ and $F(x_k) \rightharpoonup F(\bar x)$ for some $x, \bar x \in X$. Therefore, in general, we cannot ensure that the two limits coincide, that is $x = \bar x$, except when $F$ is linear and injective. On the other hand, Assumption $(C_2)$ becomes essential to ensure continuity of Pontryagin cones when there are delays in the control variables (see Section \ref{Ch6_ConvAdjoint}).}
	
\textcolor{black}{When the problem is control-affine, we assume in (one of the two possibilities of) Assumption $(C_3)$ that the optimal control $u_{\tau_0}(\cdot)$ takes its values in the extremal set of $U$. This is the case when the optimal control is bang-bang. As said above, this property permits to turn weak continuity into strong continuity. Such an assumption is sharp: the counterexample of Section \ref{counterEx} shows that, when the optimal control of (\textbf{OCP})$_{\tau_0}$ does not take its values at extremal points of $U$, only a weak continuity of optimal controls can be ensured in general.}

\begin{remark} \label{Ch5_RemarkExample}
The conclusions of Theorem \ref{Ch5_TheoMain} are valid for optimal control problems with control-affine dynamics and quadratic cost
$$
\int_0^{t_f} \Big( K_1 \| x(t) \|^2 + K_2 \| x(t-\tau^1) \|^2 + K_3 \| u(t) \|^2 + K_4 \| u(t-\tau^2) \|^2 \Big) \; \mathrm{d}t
$$
(see Section \ref{Ch6_SectMainProof} and the proof in \cite[Theorem 1]{bonalli2017}).
\end{remark}

\subsection{Weak Continuity Versus Strong Continuity of Optimal Controls} \label{counterEx}

For control-affine problems, Theorem \ref{Ch5_TheoMain} ensures weak continuity in $L^2$ of optimal controls. Moreover, when the optimal control $u_{\tau_0}(\cdot)$ takes its values at extremal points of $U$ a.e., continuity is true in strong $L^{\infty}$ topology. In this section, we provide an example where $u_{\tau_0}(\cdot)$ does not take its values at extremal points of $U$, and continuity fails in strong topology. This example shows that our assumptions are sharp.

Adapting arguments from \cite{silva2010}, consider (\textbf{OCP})$_{\tau}$ given by
\begingroup
\small
\begin{eqnarray} \label{ex_dyn}
\begin{cases}
\displaystyle \quad \min \ \int_{0}^{t_f} 1 \; \mathrm{d}t \quad , \quad (u^1(t))^2 + (u^2(t))^2 \leqslant 1 \medskip \\
\displaystyle \dot{x}^1(t) = 1 - (x^2(t))^2 + \tau u^2(t) g(x^1(t)) \quad , \quad x^1(0) = 0 \ , \ x^1(t_f) = 1 \medskip \\
\displaystyle \dot{x}^2(t) = u^1(t) + \tau u^2(t) h(x^1(t)) \quad , \quad x^2(0) = 0 \ , \ x^2(t_f) = 0
\end{cases}
\end{eqnarray}
\endgroup
where $g$ and $h$ are smooth functions, to be chosen. In this case
$$
f(t,s,x^1,x^2,u^1,u^2) = \left( \begin{array}{c}
1 - (x^2)^2 + (t - s) u^2 g(x^1) \\
u^1 + (t - s) u^2 h(x^1)
\end{array} \right) \ .
$$
Taking $\tau_0 = 0$, under appropriate assumptions on $g$ and $h$, Theorem \ref{Ch5_TheoMain} applies with weak convergence in $L^2$ of controls, but no strong convergence of controls arises. The proof follows closely the arguments of \cite{silva2010}, therefore, we just recall the main steps.

First of all, when $\tau = 0$, problem (\ref{ex_dyn}) has the unique solution $(u^1,u^2)(t) = 0$ with unique extremal $(x^1,x^2,p^1,p^2,p^0,u^1,u^2)(t) = (t,0,1,0,-1,0,0)$, where $t_f = 1$.

Remark that, in the case in which Theorem \ref{Ch5_TheoMain} applies, only the weak convergence in $L^2$ of optimal controls is ensured. The assumptions of Theorem \ref{Ch5_TheoMain} hold, in particular, every optimal control of (\textbf{OCP})$_{\tau}$ is continuous. Indeed, in \cite{silva2010} it is shown that, in the case $\tau \neq 0$, under the assumption that function $g$ may only vanish on a subset of zero measure, the optimal controls related to problem (\ref{ex_dyn}) are
\begin{equation} \label{ex_control}
\displaystyle u^1_{\tau}(t) = \frac{p^2_{\tau}(t)}{\sqrt{\phi(t)}} \quad , \quad u^2_{\tau}(t) = \frac{\tau \Big( p^1_{\tau}(t) g(x^1_{\tau}(t)) + p^2_{\tau}(t) h(x^1_{\tau}(t)) \Big)}{\sqrt{\phi(t)}}
\end{equation}
where $\phi(t) = (p^2_{\tau}(t))^2 + \tau^2 \Big( p^1_{\tau}(t) g(x^1_{\tau}(t)) + p^2_{\tau}(t) h(x^1_{\tau}(t)) \Big)^2$, and the adjoint coordinate $p^2_{\tau}$ may only vanish on subsets of zero measure. It follows that optimal controls (\ref{ex_control}) are continuous, and therefore, Theorem \ref{Ch5_TheoMain} applies.

Both $u^1_{\tau}$ and $u^2_{\tau}$ converge weakly to 0 as soon as $\tau$ tends to 0. However, in \cite{silva2010} it is also proved that specific choices of highly oscillating functions $g$ and $h$ provide that $u^1_{\tau}$ and $u^2_{\tau}$ cannot converge almost everywhere to 0 when $\tau$ tends to 0.

\subsection{Application to Shooting Methods} \label{Sect_IndirectMethods}

In this section, we briefly discuss how the continuity properties of Pontryagin extremals stated in Theorem \ref{Ch5_TheoMain} may be exploited to solve optimal control problems with delays via shooting methods.

It is known that solving generic (\textbf{OCP})$_{\tau}$ via shooting methods may be difficult. \textcolor{black}{A first difficulty is to express optimal controls as functions of $(x_{\tau}(\cdot),p_{\tau}(\cdot))$ (by the maximality condition (\ref{Ch5_Max})). We limit ourselves to highlight that this becomes possible for a large number of applications, as specified in Section \ref{Sect_Teo} where we refer to various examples. Now, assuming that one is able to provide optimal controls as functions of $(x_{\tau}(\cdot),p_{\tau}(\cdot))$, each iteration of a shooting method consists in solving the coupled dynamics (\ref{Ch5_Adjoint}), where a value of $p_{\tau}(0)$ is provided. This means that one has to solve a differential-difference boundary value problem where both forward and backward terms of time appear within mixed type differential equations.} It follows that, in order to initialize successfully a shooting method for (\ref{Ch5_Adjoint}), a guess of the initial value of the adjoint vector $p_{\tau}(0)$ is not sufficient, but rather, a good numerical guess of the whole function $p_{\tau}(\cdot)$ must be provided to make the procedure converge. This represents an additional difficulty with respect to the usual shooting method and a global discretization of the system of differential equations (\ref{Ch5_Adjoint}) must be performed. The result stated in Theorem \ref{Ch5_TheoMain} suggests that one may solve (\textbf{OCP})$_{\tau}$ numerically via shooting methods iteratively, starting from the solution of its non-delayed version (\textbf{OCP}) = (\textbf{OCP})$_{\tau = 0}$, and this by means of homotopy procedures.

The basic idea of homotopy methods is to solve a difficult problem step by step, starting from a simpler problem, by parameter deformation. Theory and practice of homotopy methods are well known (see, e.g., \cite{allgower2003}). Combined with the shooting problem derived from the Maximum Principle, a homotopy method consists in deforming the problem into a simpler one (that can be easily solved) and then in solving a series of shooting problems step by step to come back to the original problem. In many situations, exploiting the non-delayed version of the Maximum Principle mixed to other techniques (such as geometric control, dynamical system theory applied to mission design, etc., we refer the reader to \cite{trelat2012} for a survey), one is able to initialize efficiently a shooting method on the optimal control problem without delays (\textbf{OCP}). Thus, it is legitimate to wonder if one may solve (\textbf{OCP})$_{\tau}$ by shooting methods starting a homotopy procedure where delays $\tau$ represent the deformation parameter and (\textbf{OCP}) is taken as the starting problem. This approach is a way to address the flaw of shooting methods applied to (\textbf{OCP})$_{\tau}$: on one hand, the global adjoint vector for (\textbf{OCP}) could be used to initialize efficiently a shooting method on (\ref{Ch5_Adjoint}) and, on the other hand, we could solve (\ref{Ch5_Adjoint}) via usual iterative methods for ODEs (for example, by using the global state solution at the previous iteration). \textcolor{black}{In the context of Theorem \ref{Ch5_TheoMain}, the previous homotopy procedure is well-posed. Indeed, assuming that the delay $\tau$ is small enough and one is able to provide optimal controls $u_{\tau}(\cdot)$ as functions of $x_{\tau}(\cdot)$ and $p_{\tau}(\cdot)$, the continuity properties stated by Theorem \ref{Ch5_TheoMain} applied at $\tau = 0$ allow straightforwardly the homotopic path to converge towards extremals related to (\textbf{OCP})$_{\tau}$ when starting from the extremal of (\textbf{OCP}).}

Implementing homotopy methods combined with the Maximum Principle with delays, as shortly described above, however requires a number of considerations which go beyond the scope of the present paper. In a forthcoming work, we will show that this approach indeed happens to be competitive with respect to other approaches (in particular, direct methods that have been classically applied in this context, see, e.g., \cite{gollmann2009,gollmann2014}) and we will illustrate it on a nontrivial and nonacademic optimal control problem with launch vehicles, a case of application where delays are the main obstacle to \textcolor{black}{efficiently embark numerical approaches (the authors already showed the numerical efficiency of this approach on a simpler problem, see \cite{bonalli2017}).}

\section{Proof of the Main Result} \label{Sect_Proof}

Without loss of generality, we assume that $\tau_0 = 0$, denoting by (\textbf{OCP})=(\textbf{OCP})$_{\tau_0}$ the problem without delays. Furthermore, we denote $t_f = t^{\tau_0}_f$, $x(\cdot) = x_{\tau_0}(\cdot)$, $p(\cdot) = p_{\tau_0}(\cdot)$ and $u(\cdot) = u_{\tau_0}(\cdot)$.

The proof of the convergence of extremals for (\textbf{OCP})$_{\tau}$ to the extremal of (\textbf{OCP}), as $\tau$ tends to 0, is organized in three steps. First, by using assumptions on the non-delayed version of the problem only, we infer the controllability of problems (\textbf{OCP})$_{\tau}$, for every $\tau$ small enough. The previous step requires some implicit function theorem involving parameters. This allows to proceed to the second part, which consists in showing the existence of solutions for (\textbf{OCP})$_{\tau}$, for $\tau$ sufficiently small, and their convergences, as $\tau$ tends to 0, to solutions of (\textbf{OCP}). In the case of control and state delays, we will see the importance of considering control-affine systems throughout this step. Finally, we address the more difficult issue of establishing the convergence, as $\tau$ tends to 0, of the adjoint vectors related to (\textbf{OCP})$_{\tau}$ to the adjoint vector of (\textbf{OCP}): a refined analysis on the convergence of Pontryagin cones is needed.

We recall in Section \ref{Ch6_SectClassicPMP} the main steps of the proof related to the Maximum Principle for problems (\textbf{OCP})$_{\tau}$ with delays. To our knowledge, the proof of this result via needle-like variations does not appear explicitly in the literature, and for the comprehension of the whole proof, we report it. In a second time, Section \ref{Ch6_SectConic} provides a useful conic version of the implicit function theorem, depending on parameters, which is a key element for the entire reasoning. Finally, in Section \ref{Ch6_SectMainProof}, we report the whole proof of the desired result, as detailed above.

\subsection{Proof of the PMP Using Needle-Like Variations} \label{Ch6_SectClassicPMP}

In this section we sketch the proof of the Maximum Principle for (\textbf{OCP})$_{\tau}$ using needle-like variations. For this, we do not use the assumptions of Theorem \ref{Ch5_TheoMain}, giving the result for a larger class of control systems with constant delays. Our reasoning is valid as well for problems with free final time, since we do not employ the well known reduction to a fixed final time problem, but rather, we modify the Pontryagin cone to keep track of the free variable $t^{\tau}_f$, by making $L^1$-variations on $t^{\tau}_f$ (as in \cite{kharatishvili1961}, for pure state delays).

\subsubsection{Preliminary Notations}
Fix a constant delay $\tau = (\tau^0,\tau^1,\tau^2) \in [0,\Delta]^3$. Consider (\textbf{OCP})$_{\tau}$ as given by formulation (\ref{Ch5_Dyn})-(\ref{Ch5_Cost}) and let $(x_{\tau}(\cdot),u_{\tau}(\cdot))$ be an optimal solution defined in $[-\Delta,t^{\tau}_f]$. For every positive final time $\bar t_f$, introduce the instantaneous cost function $x^0_{\tau}(\cdot)$ defined in $[-\Delta,\bar t_f]$ and solution of
$$
\begin{cases}
\dot{x}^0(t) = f^0(t,t-\tau^0,x_{\tau}(t),x_{\tau}(t-\tau^1),u_{\tau}(t),u_{\tau}(t-\tau^2)) \ , \quad t \in [0,\bar t_f] \medskip \\
x^0(t) = 0 \ , \ t \in [-\Delta,0]
\end{cases}
$$
such that (\ref{Ch5_Cost}) provides $C_{\tau}(\bar t_f,u_{\tau}) = x^0_{\tau}(\bar t_f)$. We define the extended state $\tilde x = (x,x^0)$ and the extended dynamics $\tilde f(t,s,\tilde x,\tilde y,u,v) = (f(t,s,x,y,u,v),f^0(t,s,x,y,u,v))$, for which, we will often denote $\tilde f(t,s,x,y,u,v) = \tilde f(t,s,\tilde x,\tilde y,u,v)$. Consider the extended dynamical problem in $\mathbb{R}^{n+1}$
\begin{equation} \label{Ch6_DynDelayAug}
\begin{cases}
\dot{\tilde{x}}(t) = \tilde f(t,t-\tau^0,\tilde x(t),\tilde x(t-\tau^1),u(t),u(t-\tau^2)) \ , \quad t \in [0,\bar t_f] \medskip \\
\tilde x|_{[-\Delta,0]}(t) = (\phi^1(t),0) \quad , \quad \tilde x(\bar t_f) \in M_f \times \mathbb{R} \medskip \\
u(\cdot) \in L^{\infty}([-\Delta,\bar t_f],U) \quad , \quad  u|_{[-\Delta,0]}(t) = \phi^2(t) \ .
\end{cases}
\end{equation}

As provided in Section \ref{Chapter5_Preliminaries}, the set of all admissible controls of (\ref{Ch6_DynDelayAug}) in $[-\Delta,\bar t_f]$ taking their values in $\mathbb{R}^m$ is denoted by $\tilde{\mathcal{U}}^{\tau}_{\bar t_f,\mathbb{R}^m}$, while $\tilde{\mathcal{U}}^{\tau}_{\bar t_f,U}$ denotes the set of all admissible controls of (\ref{Ch6_DynDelayAug}) in $[-\Delta,\bar t_f]$ taking their values in $U$. From this
$$
\tilde{\mathcal{U}}^{\tau}_{\mathbb{R}^m} = \bigcup_{\bar t_f > 0} \tilde{\mathcal{U}}^{\tau}_{\bar t_f,\mathbb{R}^m} \quad , \quad \tilde{\mathcal{U}}^{\tau}_{U} = \bigcup_{\bar t_f > 0} \tilde{\mathcal{U}}^{\tau}_{\bar t_f,U} .
$$

The extended end-point mapping is defined as
$$
\tilde E_{\tau,\bar t_f} : \tilde{\mathcal{U}}^{\tau}_{\bar t_f,\mathbb{R}^m} \rightarrow \mathbb{R}^{n+1} : u \mapsto \tilde{x}(\bar t_f)
$$
where $\tilde x(\cdot)$ is the unique solution of problem (\ref{Ch6_DynDelayAug}), related to control $u(\cdot) \in \tilde{\mathcal{U}}^{\tau}_{\bar t_f,\mathbb{R}^m}$. As standard facts (see, e.g., \cite{bonnard2003}), the set $\tilde{\mathcal{U}}^{\tau}_{\bar t_f,\mathbb{R}^m}$, endowed with the standard topology of $L^{\infty}([-\Delta,\bar t_f],\mathbb{R}^m)$, is open and the end-point mapping is smooth on $\tilde{\mathcal{U}}^{\tau}_{\bar t_f,\mathbb{R}^m}$.

For every $t \geqslant 0$, define the extended accessible set $\tilde{\mathcal{A}}_{\tau,U}(t)$ as the image of the extended end-point mapping $\tilde E_{\tau,t}$ restricted to $\tilde{\mathcal{U}}^{\tau}_{t,U}$, where $\tilde{\mathcal{A}}_{\tau,U}(0) = \{ (\phi^1(0),0) \}$. The next fact is at the basis of the proof of the Maximum Principle (see, e.g., \cite{agrachev2013}).
\begin{lemma} \label{Ch6_LemmaInterior}
For every optimal solution $(x_{\tau}(\cdot),u_{\tau}(\cdot))$ of (\textbf{OCP})$_{\tau}$ defined in the interval $[-\Delta,t^{\tau}_f]$, the point $\tilde x_{\tau}(t^{\tau}_f)$ belongs to the boundary of the set $\tilde{\mathcal{A}}_{\tau,U}(t^{\tau}_f)$.
\end{lemma}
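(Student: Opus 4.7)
The plan is to argue by contradiction, exploiting the fact that the last coordinate of the extended state records the running cost. Suppose, contrary to the claim, that $\tilde x_{\tau}(t^{\tau}_f)$ belongs to the interior of $\tilde{\mathcal{A}}_{\tau,U}(t^{\tau}_f)$. Then there exists $r>0$ such that the open ball $B(\tilde x_{\tau}(t^{\tau}_f),r) \subseteq \tilde{\mathcal{A}}_{\tau,U}(t^{\tau}_f)$, and in particular the perturbed point
$$
\bigl(x_{\tau}(t^{\tau}_f),\, x^0_{\tau}(t^{\tau}_f) - r/2\bigr)
$$
is reachable at time $t^{\tau}_f$ with a control taking values in $U$.

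By definition of the extended accessible set, there exists $\hat u(\cdot) \in \tilde{\mathcal{U}}^{\tau}_{t^{\tau}_f,U}$ whose associated extended trajectory $(\hat x(\cdot),\hat x^0(\cdot))$ satisfies $\hat x(t^{\tau}_f) = x_{\tau}(t^{\tau}_f)$ and $\hat x^0(t^{\tau}_f) = x^0_{\tau}(t^{\tau}_f) - r/2$. Since $\hat x(t^{\tau}_f) = x_{\tau}(t^{\tau}_f) \in M_f$, the control $\hat u$ is admissible for (\textbf{OCP})$_{\tau}$ with final time $t^{\tau}_f$, and its associated cost is
$$
C_{\tau}(t^{\tau}_f,\hat u) \;=\; \hat x^0(t^{\tau}_f) \;=\; C_{\tau}(t^{\tau}_f,u_{\tau}) - r/2 \;<\; C_{\tau}(t^{\tau}_f,u_{\tau}),
$$
contradicting the optimality of $(x_{\tau}(\cdot),u_{\tau}(\cdot))$.

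There is no genuine obstacle in this argument. The delayed structure of \eqref{Ch6_DynDelayAug} plays no role, because the comparison is made between two trajectories evaluated at the single time $t^{\tau}_f$; we do not invoke needle-like variations nor any differential property of $\tilde E_{\tau,t^{\tau}_f}$ (these enter later to characterize the boundary point via the Maximum Principle). The statement is likewise insensitive to whether the problem has fixed or free final time, since we are only asserting a property of the accessible set at the specific value $t^{\tau}_f$.
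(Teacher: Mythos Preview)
Your argument is correct and is precisely the standard proof of this classical fact: if $\tilde x_{\tau}(t^{\tau}_f)$ were interior to the extended accessible set, one could reach a point with the same state component but strictly smaller cost component, contradicting optimality. The paper does not actually supply a proof of this lemma; it is stated as a well-known fact with a reference to \cite{agrachev2013}, and your proof is exactly the one found there (and in virtually every treatment of the Maximum Principle).
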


\subsubsection{Needle-Like Variations and Pontryagin Cones} \label{Ch6_SectNeedle}

In what follows we consider (\textbf{OCP})$_{\tau}$ whit free final time, remarking that all results can be adapted for problems with fixed final time (see, e.g., \cite{agrachev2013}). Moreover, we suppose that the optimal final time $t^{\tau}_f$ is a Lebesgue point for the optimal control $u_{\tau}(\cdot)$ of (\textbf{OCP})$_{\tau}$ and of $u_{\tau}(\cdot-\tau_2)$. Otherwise, we can extend all the conclusions that follow by using closure points near $t^{\tau}_f$ \textcolor{black}{(as in \cite[pages 310-314]{lee1967} or \cite[pages 133-134]{gamkrelidze2013}).} \\

For delays $\tau = (\tau^0,\tau^1,\tau^2) \in [0,\Delta]^3$, let $(x_{\tau}(\cdot),u_{\tau}(\cdot))$ be a solution of (\textbf{OCP})$_{\tau}$ and, without loss of generality, extend $u_{\tau}(\cdot)$ by some constant vector of $U$ in $[t^{\tau}_f,t^{\tau}_f+\tau^2]$. Let $j \geqslant 1$ be an integer and consider $0 < t_1 < \dots < t_j < t^{\tau}_f$ Lebesgue points respectively of $u_{\tau}(\cdot)$, $u_{\tau}(\cdot-\tau^2)$ and of $u_{\tau}(\cdot+\tau^2)$. Choosing $j$ arbitrary values $u_i \in U$, for every $\eta_i > 0$ such that $-\Delta \leqslant t_i - \eta_i$, the needle-like variation $\pi = \{ t_1,\dots,t_j,\eta_1,\dots,\eta_j,u_1,\dots,u_j \}$ of control $u_{\tau}(\cdot)$ is defined by the modified control
$$
u^{\pi}_{\tau}(t) = \begin{cases} u_i & t \in (t_i - \eta_i,t_i] \ , \\ u_{\tau}(t) & \textnormal{otherwise} \ . \end{cases}
$$
Control $u^{\pi}_{\tau}(\cdot)$ takes its values in $U$ and, by continuity with respect to initial data, whenever $\| (\eta_1,\dots,\eta_j) \| \rightarrow 0$, the trajectory $\tilde x^{\pi}_{\tau}(\cdot)$, solution of the dynamics of (\ref{Ch6_DynDelayAug}) related to control $u^{\pi}_{\tau}(\cdot)$, converges uniformly to $\tilde x_{\tau}(\cdot) = (x^0_{\tau}(\cdot),x_{\tau}(\cdot))$. For every value $z \in U$ and appropriate Lebesgue point $s\in (0,t^{\tau}_f)$, we define the vectors
\begin{eqnarray} \label{Ch6_OmegaMinus}
\omega^-_z(s) &=& \tilde f(s,s-\tau^0,x_{\tau}(s),x_{\tau}(s-\tau^1),z,u_{\tau}(s-\tau^2)) \\
&-& \tilde f(s,s-\tau^0,x_{\tau}(s),x_{\tau}(s-\tau^1),u_{\tau}(s),u_{\tau}(s-\tau^2)) \nonumber
\end{eqnarray}
\begin{eqnarray} \label{Ch6_OmegaPlus}
\hspace{10pt} \omega^+_z(s) &=& \tilde f(s+\tau^2,s+\tau^2-\tau^0,x_{\tau}(s+\tau^2),x_{\tau}(s+\tau^2-\tau^1),u_{\tau}(s+\tau^2),z) \\
&-& \tilde f(s+\tau^2,s+\tau^2-\tau^0,x_{\tau}(s+\tau^2),x_{\tau}(s+\tau^2-\tau^1),u_{\tau}(s+\tau^2),u_{\tau}(s)) \nonumber
\end{eqnarray}
and, given $\xi \in \mathbb{R}^{n+1}$, we denote by $\tilde v^{\tau}_{s,\xi}(\cdot)$ the solution of the following linear system
\begin{eqnarray} \label{Ch6_DynVariation}
\begin{cases}
\dot{\psi}(t) = \displaystyle \frac{\partial \tilde f}{\partial x}(t,t-\tau^0,x_{\tau}(t),x_{\tau}(t-\tau^1),u_{\tau}(t),u_{\tau}(t-\tau^2)) \psi(t) \medskip \\
\displaystyle \hspace{25pt} + \frac{\partial \tilde f}{\partial y}(t,t-\tau^0,x_{\tau}(t),x_{\tau}(t-\tau^1),u_{\tau}(t),u_{\tau}(t-\tau^2)) \psi(t-\tau^1) \medskip \\
\psi(s) = \xi \quad , \quad \psi(t) = 0 \ , \ t \in (s-\tau^1,s) \ .
\end{cases}
\end{eqnarray}
Functions $\tilde v^{\tau}_{s,\xi} : \mathbb{R} \rightarrow \mathbb{R}^{n+1}$ are usually called variations vectors. In what follows, we denote $\tilde w^{\tau}_{s,z}(t) = \tilde v^{\tau}_{s,\omega^-_z(s)}(t) + \tilde v^{\tau}_{s+\tau^2,\omega^+_z(s)}(t)$.

\begin{definition} \label{Ch6_DefCone}
\textcolor{black}{For every $t \in (0,t^{\tau}_f]$, the Pontryagin cone $\tilde K^{\tau}(t) \subseteq \mathbb{R}^{n+1}$ at $\tilde x_{\tau}(t)$ for the extended system is defined as the smallest closed convex cone containing vectors $\tilde w^{\tau}_{s,z}(t)$ where $z \in U$ and $0 < s < t$ is a Lebesgue point of $u_{\tau}(\cdot)$, $u_{\tau}(\cdot-\tau^2)$ and of $u_{\tau}(\cdot+\tau^2)$. The augmented Pontryagin cone $\tilde K^{\tau}_1(t) \subseteq \mathbb{R}^{n+1}$ at $\tilde x_{\tau}(t)$ for the extended system is defined as the smallest closed convex cone containing $\tilde f(t,t-\tau^0,x_{\tau}(t),x_{\tau}(t-\tau^1),u_{\tau}(t),u_{\tau}(t-\tau^2))$, $-\tilde f(t,t-\tau^0,x_{\tau}(t),x_{\tau}(t-\tau^1),u_{\tau}(t),u_{\tau}(t-\tau^2))$ and vectors $\tilde w^{\tau}_{s,z}(t)$ where $z \in U$ and $0 < s < t$ is a Lebesgue point of $u_{\tau}(\cdot)$, $u_{\tau}(\cdot-\tau^2)$ and of $u_{\tau}(\cdot+\tau^2)$. The Pontryagin cone $K^{\tau}(t) \subseteq \mathbb{R}^{n}$ and the augmented Pontryagin cone $K^{\tau}_1(t) \subseteq \mathbb{R}^{n}$ at $x_{\tau}(t)$ for the non-extended system are defined similarly, considering dynamics $f$ instead of the extended dynamics $\tilde f$. Obviously, the cones $K^{\tau}(t)$, $K^{\tau}_1(t)$ are the projections onto $\mathbb{R}^n$ of $\tilde K^{\tau}(t)$, $\tilde K^{\tau}_1(t)$, respectively.}
\end{definition}

\begin{remark} \label{Ch6_RemarkVariations}
In the case of optimal control problems without delays, that is (\textbf{OCP})$_{\tau=0}$, the definition of Pontryagin cones is slightly different from the one obtained from Definition \ref{Ch6_DefCone} with the substitution $\tau = 0$. Indeed, considering $\tilde K^0(t)$, vectors $\tilde w^{\tau}_{s,z}(t)$ are rather replaced by single variations $\tilde v^{0}_{s,\omega_z(s)}(t)$ for which
$$
\omega_z(s) = \tilde f(s,s,x_0(s),x_0(s),z,z) - \tilde f(s,s,x_0(s),x_0(s),u_0(s),u_0(s))
$$
where $(x_0(\cdot),u_0(\cdot))$ is an optimal solution for (\textbf{OCP})$_{\tau=0}$ (see, e.g., \cite{pontryagin1987}).
\end{remark}

\begin{lemma} [Needle-Like Variation Formula with Delays]\label{Ch6_LemmaNeedleLike}
Let $(\delta,\eta_1,\dots,\eta_j) \in \mathbb{R} \times \mathbb{R}^j_+$ small enough. For $t_j< t \leqslant t^{\tau}_f$ Lebesgue point of $u_{\tau}(\cdot)$, $u_{\tau}(\cdot-\tau^2)$, we have
\begin{multline*}
\tilde x^{\pi}_{\tau}(t + \delta) = \tilde x_{\tau}(t) + \delta \tilde f(t,t-\tau^0,x_{\tau}(t),x_{\tau}(t-\tau^1),u_{\tau}(t),u_{\tau}(t-\tau^2)) \medskip \\
+ \sum_{i=1}^{j} \eta_i \Big( \tilde v^{\tau}_{t_i,\omega^-_{u_i}(t_i)}(t) + \tilde v^{\tau}_{t_i+\tau^2,\omega^+_{u_i}(t_i)}(t) \Big) + o \Big( \delta + \sum_{i=1}^{j} \eta_i \Big) .
\end{multline*}
\end{lemma}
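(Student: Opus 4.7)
The plan is to analyze a single needle variation ($j=1$) first and then deduce the multi-needle case by a linearity/induction argument, finally adding the contribution of the time perturbation $\delta$. The central observation is that a needle at $t_i$ with amplitude $\eta_i$ produces two first-order kicks on the extended trajectory: a direct kick at $t_i$, coming from the modified $u(\cdot)$ in the dynamics (\ref{Ch6_DynDelayAug}), and a delayed kick at $t_i + \tau^2$, coming from the modified $u(\cdot - \tau^2)$. Between and after these kicks, perturbed and unperturbed controls coincide, so the first-order difference of trajectories evolves according to the delayed linear Cauchy problem (\ref{Ch6_DynVariation}).

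Take first $\pi = \{t_1, \eta_1, u_1\}$ with $j = 1$. Since $\tilde x^{\pi}_{\tau}(s) = \tilde x_{\tau}(s)$ for $s \leqslant t_1 - \eta_1$, integrating (\ref{Ch6_DynDelayAug}) over $(t_1 - \eta_1, t_1]$ and using that $t_1$ is a Lebesgue point of $u_{\tau}(\cdot)$ and of $u_{\tau}(\cdot - \tau^2)$, together with continuity of $\tilde f$ and of $\tilde x_{\tau}$, I would first establish
\[
\tilde x^{\pi}_{\tau}(t_1) - \tilde x_{\tau}(t_1) = \eta_1 \, \omega^-_{u_1}(t_1) + o(\eta_1) .
\]
Then on $\bigl(t_1, \min(t_1 + \tau^2, t^{\tau}_f)\bigr]$ both trajectories satisfy the same delayed dynamics with distinct states; a first-order Taylor expansion of $\tilde f$ at $(x_{\tau}(s), x_{\tau}(s - \tau^1))$ combined with a Gronwall-type argument for delay systems (step by step on successive intervals of length $\tau^1$) yields that the difference equals $\eta_1 \, \tilde v^{\tau}_{t_1, \omega^-_{u_1}(t_1)}(s) + o(\eta_1)$.

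The main obstacle is the second kick at $s = t_1 + \tau^2$: on $(t_1 + \tau^2 - \eta_1, t_1 + \tau^2]$ the shifted control equals $u^{\pi}_{\tau}(s - \tau^2) = u_1 \neq u_{\tau}(s - \tau^2)$, so the dynamics (\ref{Ch6_DynDelayAug}) gains an extra increment whose leading part in $\eta_1$ is precisely $\eta_1 \, \omega^+_{u_1}(t_1)$, see (\ref{Ch6_OmegaPlus}). It is here that the hypothesis that $t_1$ is a Lebesgue point of $u_{\tau}(\cdot + \tau^2)$ becomes essential, together with the uniform convergence $\tilde x^{\pi}_{\tau} \to \tilde x_{\tau}$ as $\eta_1 \to 0$, itself a consequence of continuous dependence on initial data for delay systems. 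The secondary jump of size $\eta_1 \, \omega^+_{u_1}(t_1) + o(\eta_1)$ is then propagated by (\ref{Ch6_DynVariation}), producing the additional contribution $\eta_1 \, \tilde v^{\tau}_{t_1 + \tau^2, \omega^+_{u_1}(t_1)}(t) + o(\eta_1)$; with the natural convention $\tilde v^{\tau}_{s, \xi}(t) = 0$ for $t < s$, this term is automatically absent when $t < t_1 + \tau^2$, which is consistent with the statement.

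For $j \geqslant 2$ with $\max_i \eta_i$ small enough, the $2j$ intervals $(t_i - \eta_i, t_i]$ and $(t_i + \tau^2 - \eta_i, t_i + \tau^2]$ are pairwise disjoint; an induction on the ordered kicks, exploiting linearity of (\ref{Ch6_DynVariation}) and the Gronwall estimate above, shows that the overall first-order perturbation at time $t$ is the sum of the individual contributions, with remainder $o\bigl(\sum_i \eta_i\bigr)$. Finally, since $t$ is a Lebesgue point of $u_{\tau}(\cdot)$ and $u_{\tau}(\cdot - \tau^2)$ and, for small $\eta_i$, does not belong to any of the $2j$ perturbation intervals, $\tilde x^{\pi}_{\tau}$ is differentiable at $t$ with derivative $\tilde f(t, t - \tau^0, \tilde x^{\pi}_{\tau}(t), \tilde x^{\pi}_{\tau}(t - \tau^1), u_{\tau}(t), u_{\tau}(t - \tau^2))$, and using once more the uniform convergence $\tilde x^{\pi}_{\tau} \to \tilde x_{\tau}$ gives $\tilde x^{\pi}_{\tau}(t + \delta) - \tilde x^{\pi}_{\tau}(t) = \delta \, \tilde f(t, t - \tau^0, x_{\tau}(t), x_{\tau}(t - \tau^1), u_{\tau}(t), u_{\tau}(t - \tau^2)) + o\bigl(\delta + \sum_i \eta_i\bigr)$, which combined with the previous expansion at time $t$ yields the claim.
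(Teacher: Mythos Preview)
Your proposal is correct and follows essentially the same approach as the paper's proof: both treat the case $j=1$ by identifying the two first-order kicks at $t_1$ and $t_1+\tau^2$, propagate them via the linearized delay equation (\ref{Ch6_DynVariation}) using a Taylor expansion of $\tilde f$ together with Gronwall's inequality, then pass to general $j$ by induction and finally add the $\delta$-contribution using that $t$ is a Lebesgue point of $u_\tau(\cdot)$ and $u_\tau(\cdot-\tau^2)$. The paper carries out the estimates more explicitly (writing the integral decomposition and the second-order Taylor remainder), but the structure of the argument is the same as yours.
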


The proof of this lemma is technical (but not difficult). It is done in Appendix~\ref{appendixProof}.

\subsubsection{Proof of The Maximum Principle} \label{Ch6_SectProofPMPDelay}

One way to prove the Maximum Principle is by contradiction via the following classical result (see, e.g., \cite{agrachev2013,dmitruk1980,bonnard2006}).

\begin{lemma}[Conic Implicit Function Theorem] \label{Ch6_IFT}
\textcolor{black}{Let $C \subseteq \mathbb{R}^m$ be convex with non empty interior, of vertex 0, and $F : C \rightarrow \mathbb{R}^n$ be Lipschitz such that $F(0) = 0$ and G\^{a}teaux differentiable at 0 along admissible directions of $C$, i.e., there exists a linear mapping $dF(0) : \mathbb{R}^m \rightarrow \mathbb{R}^n$ such that, for every $x \in C$
\begin{equation} \label{Ch6_derConvex}
\frac{F(\alpha x)}{\alpha} \underset{\begingroup \tiny \begin{array}{c}
\hspace{5pt} \alpha \rightarrow 0^+
\end{array} \endgroup}{\xrightarrow{\hspace*{20pt}}} dF(0) x \ .
\end{equation}
If $dF(0)\cdot \textnormal{Cone}(C) = \mathbb{R}^n$, where $\textnormal{Cone}(C)$ stands for the (convex) cone generated by elements of $C$, then $0 \in \textnormal{Int} \; F(\mathcal{V} \cap C)$, for every neighborhood $\mathcal{V}$ of 0 in $\mathbb{R}^m$.}
\end{lemma}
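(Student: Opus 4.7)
The plan is to reduce to a finite-dimensional Brouwer-degree argument applied to a simplicial approximation of $F$ by its G\^ateaux derivative at $0$. First I will exploit the surjectivity hypothesis $dF(0)\cdot \textnormal{Cone}(C) = \mathbb{R}^n$ to produce vectors $x_0,\dots,x_n \in C$ whose images $v_i = dF(0)x_i$ positively span $\mathbb{R}^n$ (so that $0$ is in the interior of $\textnormal{conv}\{v_0,\dots,v_n\}$) and are affinely independent (so that $v_1-v_0,\dots,v_n-v_0$ is a basis of $\mathbb{R}^n$). Such a choice is possible because the linear image of the convex cone $\textnormal{Cone}(C)$ is all of $\mathbb{R}^n$, so by Carath\'eodory's theorem for cones one obtains a positive basis of size $n+1$ inside $C$, and small perturbations (admissible because $C$ has non-empty interior) secure affine independence.

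Identifying the standard simplex $\Delta \subset \mathbb{R}^{n+1}$ with its $n$-dimensional barycentric realization, I define, for each $\varepsilon > 0$, the map $G_\varepsilon : \Delta \to \mathbb{R}^n$ by $G_\varepsilon(\lambda) = F(\varepsilon z_\lambda)/\varepsilon$ where $z_\lambda = \sum_{i=0}^n \lambda_i x_i$. Convexity of $C$ together with $0 \in C$ ensures $\varepsilon z_\lambda \in C$ for every $\lambda \in \Delta$ and every $\varepsilon \in [0,1]$, and for $\varepsilon$ small enough these points also sit inside any prescribed neighborhood $\mathcal{V}$ of $0$. The affine model $L(\lambda) = \sum_i \lambda_i v_i$ is, by the first step, an affine homeomorphism from $\Delta$ onto a filled $n$-simplex $L(\Delta) \subset \mathbb{R}^n$ whose interior contains a ball $B_{2r}(0)$; moreover $L^{-1}(B_{2r}(0))$ is compactly contained in the relative interior of $\Delta$.

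The crux of the argument is the uniform convergence $G_\varepsilon \to L$ on $\Delta$ as $\varepsilon \to 0^+$. The Lipschitz hypothesis on $F$ yields the uniform-in-$\varepsilon$ estimate $\|G_\varepsilon(\lambda)-G_\varepsilon(\mu)\| \leq \textnormal{Lip}(F)\cdot \max_i \|x_i\|\cdot \|\lambda-\mu\|$, so the family $\{G_\varepsilon\}$ is equi-Lipschitz on $\Delta$; meanwhile hypothesis \eqref{Ch6_derConvex} applied to each $z_\lambda \in C$ provides pointwise convergence $G_\varepsilon(\lambda) \to dF(0)z_\lambda = L(\lambda)$. A standard compactness argument on the compact set $\Delta$ upgrades pointwise to uniform convergence. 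I expect this to be the main technical step: G\^ateaux differentiability is assumed only along rays from $0$ in $C$, and the equi-Lipschitz bound inherited from $F$ is precisely the missing ingredient that permits the joint control of $G_\varepsilon$ in $(\varepsilon,\lambda)$.

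To conclude, fix $y \in B_r(0)$ and consider the straight-line homotopy $H_t(\lambda) = (1-t)L(\lambda) + tG_\varepsilon(\lambda) - y$, $t \in [0,1]$. As soon as $\|G_\varepsilon - L\|_\infty < r/2$ on $\Delta$, the map $H_t$ does not vanish on the relative boundary $\partial \Delta$, because $L(\partial \Delta)$ is the topological boundary of $L(\Delta)$ and hence lies at distance at least $2r$ from $0$. Since $L$ is an affine homeomorphism with $L^{-1}(y)$ in the interior of $\Delta$, the Brouwer degree of $L - y$ at $0$ on the interior of $\Delta$ equals $\pm 1$; by homotopy invariance the same holds for $G_\varepsilon - y$, producing $\lambda_y \in \Delta$ with $G_\varepsilon(\lambda_y) = y$, i.e., $F(\varepsilon z_{\lambda_y}) = \varepsilon y$ with $\varepsilon z_{\lambda_y} \in \mathcal{V} \cap C$. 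Letting $y$ range over $B_r(0)$ yields $B_{\varepsilon r}(0) \subseteq F(\mathcal{V} \cap C)$, whence $0 \in \textnormal{Int}\, F(\mathcal{V} \cap C)$.
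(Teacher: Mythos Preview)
Your argument is correct. The paper does not supply its own proof of this lemma, citing it as a classical result and remarking only that it rests on the Brouwer fixed point theorem; your simplex-plus-degree construction, with the key step being the upgrade from pointwise to uniform convergence of $G_\varepsilon \to L$ via the equi-Lipschitz bound inherited from $F$, is precisely a proof of that type.
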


Consider any integer $j \geqslant 1$ and a positive real number $\varepsilon_j > 0$. Define
$$
G^{\tau}_j : B_{\varepsilon_j}(0) \cap \mathbb{R} \times  \mathbb{R}^j_+ \rightarrow \mathbb{R}^{n+1} : (\delta,\eta_1,\dots,\eta_j)\mapsto \tilde x^{\pi}_{\tau}(t^{\tau}_f + \delta) - \tilde x_{\tau}(t^{\tau}_f)
$$
where $\pi$ is any variation of control $u_{\tau}(\cdot)$ and $\varepsilon_j$ is small enough such that $G^{\tau}_j$ is well-defined (see Section \ref{Ch6_SectNeedle}). The following statements hold:
\begin{itemize}
\item $G^{\tau}_j(0) = 0$ and $G^{\tau}_j$ is Lipschitz continuous.
\item $G^{\tau}_j$ is G\^{a}teaux differentiable at 0 along admissible directions of the convex set $B_{\varepsilon_j}(0) \cap \mathbb{R} \times  \mathbb{R}^j_+$ (in the sense of (\ref{Ch6_derConvex})) thanks to Lemma \ref{Ch6_LemmaNeedleLike}.
\end{itemize}
The Lipschitz behavior of $G^{\tau}_j$ is proved by a recursive use of needle-like variations at $t_i - \eta_i$, $1 \leqslant i \leqslant j$ (for $\eta_i$ small enough), Lebesgue points of $u_{\tau}(\cdot)$, by making a recursive use of Lemma \ref{Ch6_LemmaNeedleLike}. Remark that, since $t_i - \eta_i$ are Lebesgue points of $u_{\tau}(\cdot)$ only for almost every $\eta_i$, the recursive use of Lemma \ref{Ch6_LemmaNeedleLike} can be done only almost everywhere. The conclusion follows from the continuity of $G^{\tau}_j$ and density arguments.

The Maximum Principle is established as follows. Suppose, by contradiction, that the cone $\tilde K^{\tau}_1(t^{\tau}_f)$ coincides with $\mathbb{R}^{n+1}$. Then, by definition, there would exist an integer $j \geqslant 1$, a variation $\pi$ of $u_{\tau}(\cdot)$ and a positive real number $\varepsilon_j > 0$ such that
$$
d G^{\tau}_j(0)\cdot(\mathbb{R} \times \mathbb{R}^j_+) = \tilde K^{\tau}_1(t^{\tau}_f) = \mathbb{R}^{n+1} .
$$
In this case, Lemma \ref{Ch6_IFT} would imply that the point $\tilde x_{\tau}(t^{\tau}_f)$ belongs to the interior of the accessible set $\tilde{\mathcal{A}}_{\tau,U}(t^{\tau}_f)$, which contradicts Lemma \ref{Ch6_LemmaInterior}. Therefore:

\begin{lemma} \label{Ch6_LemmaMultiplier}
There exists $\tilde \psi_{\tau} \in \mathbb{R}^{n+1} \setminus \{ 0 \}$ (\textit{Lagrange multiplier}) such that
\begin{eqnarray*}
&\langle \tilde \psi_{\tau} , \tilde f(t^{\tau}_f,t^{\tau}_f-\tau^0,x_{\tau}(t^{\tau}_f),x_{\tau}(t^{\tau}_f-\tau^1),u_{\tau}(t^{\tau}_f),u_{\tau}(t^{\tau}_f-\tau^2)) \rangle = 0 , \medskip \\
&\langle \tilde \psi_{\tau} , \tilde v_{\tau} \rangle \leqslant 0 \quad , \quad \forall \ \tilde v_{\tau} \in \tilde K^{\tau}(t^{\tau}_f) .
\end{eqnarray*}
\end{lemma}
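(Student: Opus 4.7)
The plan is to extract the multiplier $\tilde\psi_\tau$ from the contradiction argument sketched in the paragraph preceding the statement: that argument shows that if $\tilde K^\tau_1(t^\tau_f)$ were all of $\mathbb{R}^{n+1}$, then a finite needle-like variation would, through Lemma \ref{Ch6_IFT} and the needle-like expansion of Lemma \ref{Ch6_LemmaNeedleLike}, force $\tilde x_\tau(t^\tau_f)$ into the interior of $\tilde{\mathcal{A}}_{\tau,U}(t^\tau_f)$, contradicting Lemma \ref{Ch6_LemmaInterior}. I will take as given that $\tilde K^\tau_1(t^\tau_f)$ is therefore a proper subset of $\mathbb{R}^{n+1}$; by construction it is a closed convex cone with vertex at $0$, so its polar cone is nontrivial.

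Picking any $\tilde \psi_\tau \in \mathbb{R}^{n+1} \setminus \{0\}$ with $\langle \tilde\psi_\tau, \tilde v\rangle \leqslant 0$ for every $\tilde v \in \tilde K^\tau_1(t^\tau_f)$, the two conclusions follow directly. Denoting for short $\tilde f_\star = \tilde f(t^\tau_f,t^\tau_f-\tau^0,x_\tau(t^\tau_f),x_\tau(t^\tau_f-\tau^1),u_\tau(t^\tau_f),u_\tau(t^\tau_f-\tau^2))$, both $\tilde f_\star$ and $-\tilde f_\star$ lie in $\tilde K^\tau_1(t^\tau_f)$ by definition of the \emph{augmented} Pontryagin cone, so applying the polarity inequality to both forces $\langle \tilde \psi_\tau, \tilde f_\star\rangle = 0$, which is the first displayed assertion. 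The second assertion is immediate from the inclusion $\tilde K^\tau(t^\tau_f) \subseteq \tilde K^\tau_1(t^\tau_f)$, which is built into Definition \ref{Ch6_DefCone}.

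The genuinely delicate point, already dispatched in the preceding discussion, is the reduction of the existence of $\tilde\psi_\tau$ to the strict inclusion $\tilde K^\tau_1(t^\tau_f) \subsetneq \mathbb{R}^{n+1}$: one must verify that if this inclusion fails, then some finitely-generated subcone of $\tilde K^\tau_1(t^\tau_f)$ already equals $\mathbb{R}^{n+1}$, so that for a suitable finite variation $\pi$ with $j$ parameters the image $dG^\tau_j(0)\cdot(\mathbb{R}\times\mathbb{R}^j_+)$ covers $\mathbb{R}^{n+1}$ and Lemma \ref{Ch6_IFT} can be applied. This reduction relies on the positive-scaling structure of the expansion in Lemma \ref{Ch6_LemmaNeedleLike} and on the definition of $\tilde K^\tau_1(t^\tau_f)$ as a closure of convex conical combinations of such generators. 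Once it is in place, separating a proper closed convex cone from a point outside it in a finite-dimensional Euclidean space is elementary, so I expect the rest of the proof to be short and essentially formal.
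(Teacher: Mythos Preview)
Your proposal is correct and matches the paper's approach essentially line for line: the paper establishes $\tilde K^\tau_1(t^\tau_f)\subsetneq\mathbb{R}^{n+1}$ by the contradiction argument immediately preceding the lemma (via Lemma~\ref{Ch6_IFT} and Lemma~\ref{Ch6_LemmaInterior}) and then states the lemma as a direct consequence, leaving the convex-separation step implicit. You have in fact spelled out more than the paper does, namely how the two displayed conclusions are read off from a single nonzero vector in the polar of $\tilde K^\tau_1(t^\tau_f)$ using $\pm\tilde f_\star\in\tilde K^\tau_1(t^\tau_f)$ and $\tilde K^\tau(t^\tau_f)\subseteq\tilde K^\tau_1(t^\tau_f)$.
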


The relations provided by Lemma \ref{Ch6_LemmaMultiplier} allow to derive the necessary conditions (\ref{Ch5_Adjoint})-(\ref{Ch5_FinalCond}) given in Section \ref{Chapter5_Preliminaries} (we skip these computations, referring to \cite{agrachev2013,bonnard2006,halanay1966} for details). The relation between the adjoint vector satisfying (\ref{Ch5_Adjoint}) and the above Lagrange multiplier $\tilde \psi_{\tau} = (\psi_{\tau},\psi^0_{\tau})$ is that $(p_{\tau}(\cdot),p^0_{\tau})$ is built so that $p_{\tau}(t^{\tau}_f) = \psi_{\tau}$, $p^0_{\tau} = \psi^0_{\tau}$. We will make use of the result below, which follows from the previous considerations.

\begin{lemma} \label{Ch6_EmmanuelPlane}
Consider the free final time problem (\textbf{OCP})$=$(\textbf{OCP})$_{\tau = 0}$. For any optimal trajectory $x(\cdot)$ of (\textbf{OCP}), the following statements are equivalent:
\begin{itemize}
\item The trajectory $x(\cdot)$ has an unique extremal lift $(x(\cdot),p(\cdot),p^0,u(\cdot))$ whose adjoint $(p(\cdot),p^0)$ is unique up to a multiplicative scalar, which is normal.
\item The cone $\tilde K^{\tau=0}_1(t_f)$ is a half-space of $\ \mathbb{R}^{n+1}$ and $K^{\tau=0}_1(t_f) = \mathbb{R}^n$.
\end{itemize}
\end{lemma}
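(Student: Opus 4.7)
The plan is to exploit the classical convex duality between Lagrange multipliers and the Pontryagin cone $\tilde K_1^{\tau=0}(t_f)$. First I would read off Lemma \ref{Ch6_LemmaMultiplier} at $\tau = 0$: a nonzero $\tilde\psi = (\psi,\psi^0) \in \mathbb{R}^{n+1}$ is a Lagrange multiplier of some extremal lift (via $p(t_f) = \psi$, $p^0 = \psi^0$) if and only if $\langle \tilde\psi, \tilde v\rangle \leqslant 0$ for all $\tilde v \in \tilde K^0(t_f)$ and $\langle \tilde\psi, \tilde f(t_f,\ldots)\rangle = 0$. Since $\tilde K_1^0(t_f)$ is, by Definition \ref{Ch6_DefCone}, the smallest closed convex cone containing $\tilde K^0(t_f)$ together with $\pm \tilde f(t_f,\ldots)$, these two conditions collapse into the single membership
$$
\tilde\psi \in -(\tilde K_1^0(t_f))^* := \{\tilde\psi \in \mathbb{R}^{n+1} : \langle \tilde\psi, \tilde v\rangle \leqslant 0 \ \ \forall \ \tilde v \in \tilde K_1^0(t_f)\},
$$
supplemented by the sign condition $\psi^0 \leqslant 0$. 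Hence the extremal lifts of $x(\cdot)$, taken up to positive scalar, are in bijection with the directions of the negative polar cone $-(\tilde K_1^0(t_f))^*$ compatible with $\psi^0 \leqslant 0$.

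Next I would invoke the standard convex-geometric fact that, for a closed convex cone $C \subseteq \mathbb{R}^{n+1}$ with $C \neq \mathbb{R}^{n+1}$, the negative polar $-C^*$ reduces to a single closed ray if and only if $C$ is a closed half-space, the ray being generated by the outward normal to $C$. Applied at $C = \tilde K_1^0(t_f)$, uniqueness up to multiplicative scalar of the extremal lift is therefore equivalent to $\tilde K_1^0(t_f)$ being a half-space of $\mathbb{R}^{n+1}$; write it as $\{(x,x^0) : \langle \psi, x\rangle + \psi^0 x^0 \leqslant 0\}$ with outward normal $(\psi,\psi^0)$.

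It remains to tie the condition $K_1^0(t_f) = \mathbb{R}^n$ to normality. By Definition \ref{Ch6_DefCone}, $K_1^0(t_f)$ is the projection of $\tilde K_1^0(t_f)$ onto the first $n$ coordinates; the projection of the half-space $\{\langle \psi, x\rangle + \psi^0 x^0 \leqslant 0\}$ onto $\mathbb{R}^n$ equals $\mathbb{R}^n$ if and only if $\psi^0 \neq 0$ (one freely solves for $x^0$ of appropriate sign and magnitude), and otherwise it is the proper half-space $\{\langle \psi, x\rangle \leqslant 0\}$ of $\mathbb{R}^n$. Thus $K_1^0(t_f) = \mathbb{R}^n$ is equivalent to $\psi^0 \neq 0$, which, combined with $\psi^0 \leqslant 0$, licenses the normalization $\psi^0 = -1$ and gives normality of the (unique) lift. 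Chaining the three equivalences proves both implications.

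The main obstacle I anticipate is not the duality step itself (which is classical), but rather a careful handling of a few auxiliary items: the sign convention ensuring that the ray $-(\tilde K_1^0(t_f))^*$ contains a representative with $p^0 \leqslant 0$; the possibly degenerate case where $\tilde K_1^0(t_f)$ happens to be a proper linear subspace (a ``degenerate'' half-space whose polar is a line), for which uniqueness up to positive scalar has to be recovered through the sign constraint $\psi^0 \leqslant 0$; and the transversality condition at $M_f$ when $M_f$ is a genuine submanifold, standardly absorbed into the cone by appending the linear subspace $T_{x(t_f)} M_f \times \{0\}$, which contributes equality constraints on $\tilde\psi$ that reproduce exactly the transversality $\psi \perp T_{x(t_f)} M_f$.
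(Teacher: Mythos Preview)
Your approach is correct and is essentially the natural explication of what the paper leaves implicit: the paper does not give a detailed proof of this lemma, stating only that it ``follows from the previous considerations'' (namely the correspondence between Lagrange multipliers and separating vectors for $\tilde K_1^{0}(t_f)$ established in Lemma~\ref{Ch6_LemmaMultiplier}). Your argument via the polar cone --- identifying extremal lifts with nonzero elements of $-(\tilde K_1^{0}(t_f))^*$ having $\psi^0\leqslant 0$, invoking the bipolar theorem to characterize when the polar is a single ray, and reading off normality from the projection onto $\mathbb{R}^n$ --- is precisely the standard route and matches what the paper has in mind.
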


\subsection{Conic Implicit Function Theorem with Parameters} \label{Ch6_SectConic}

The first step of the proof of Theorem \ref{Ch5_TheoMain} makes use of the procedure detailed in Section \ref{Ch6_SectClassicPMP}. More specifically, we need the needle-like variation formula and the conic implicit function theorem. However, Lemma \ref{Ch6_IFT} is not suited to this situation because we have to take into account the dependence with respect to delays $\tau$. Indeed, the proof of Lemma \ref{Ch6_IFT} is based on Brouwer fixed point theorem (see, e.g., \cite{agrachev2013}) which does not consider continuous dependence with respect to parameters (which, in our case, is represented by $\tau$). Therefore, in this section, we introduce a more general version of the conic implicit function theorem depending on parameters.

When considering delays $\tau$ as a varying parameter, the variation formula provided by Lemma \ref{Ch6_LemmaNeedleLike} holds only for almost every $\tau$, and this, because we need that each $t_i$ be a Lebesgue point of $u_{\tau}(\cdot)$, $u_{\tau}(\cdot - \tau)$ and of $u_{\tau}(\cdot + \tau)$. This leads us to introduce a notion of conic implicit function theorem which, on one hand, ensures a continuous dependence with respect to parameters and, on the other hand, deals with quantities defined uniquely on dense subsets. The notion of differentiability that we need is the following. A function $f : C \subseteq \mathbb{R}^j \rightarrow \mathbb{R}^n$ is said almost everywhere strictly differentiable at some point $x_0 \in C$ whenever there exists a linear continuous mapping $df(x_0) : \mathbb{R}^j \rightarrow \mathbb{R}^n$ such that
$$
f(y) - f(x) = df(x_0)\cdot(y - x) + \| y - x \|g(x,y)
$$
for almost every $x,y \in C$, where $g(x,y)$ tends to 0 as soon as $\| x - x_0 \| + \| y - x_0 \| \xrightarrow{a.e.} 0$.

One may remark that the notion of strictly differentiability and of conic implicit function theorem depending on parameters has already been introduced by \cite{antoine1990}. In our framework, we adapt these results to dense subsets.

\begin{lemma}[Conic Implicit Function Theorem with Parameters] \label{Ch6_IFTP}
Let $C \subseteq \mathbb{R}^j$ be open and convex with non empty interior, of vertex 0, and $F : \mathbb{R}^k_+ \times C \rightarrow \mathbb{R}^n : (\varepsilon,x) \mapsto F(\varepsilon,x)$ be a continuous mapping, for which $F(0,0) = 0$, satisfying the following:
\begin{itemize}
\item For almost every $\varepsilon \in \mathbb{R}^k_+$, $F$ is almost everywhere strictly differentiable with respect to $x$ at 0, and, $\displaystyle \frac{\partial F}{\partial x}(\varepsilon,0)$ is continuous in $\varepsilon$ on a dense subset.
\item For almost every $\varepsilon \in \mathbb{R}^k_+$, the remainder satisfies $g_{\varepsilon}(x,y) \rightarrow 0$ as $(x,y) \xrightarrow{a.e.} 0$, uniformly with respect to $\varepsilon$ on a dense subset.
\item There holds $\displaystyle \frac{\partial F}{\partial x}(0,0)\cdot\textnormal{Cone}(C) = \mathbb{R}^n$.
\end{itemize}
Therefore, there exist $\varepsilon_0 > 0$, a neighborhood $\mathcal{V}$ of 0 in $\mathbb{R}^n$ and a continuous function $h : [0,\varepsilon_0)^k \times \mathcal{V} \rightarrow C$, such that $F(\varepsilon,h(\varepsilon,y)) = y$ for every $\varepsilon \in [0,\varepsilon_0)^k$ and every $y \in \mathcal{V}$.
\end{lemma}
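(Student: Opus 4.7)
The plan is to imitate the standard proof of the classical conic implicit function theorem (Lemma~\ref{Ch6_IFT}) based on Brouwer's fixed point theorem, but parametrized by $\varepsilon$ and with the a.e. assumptions handled by density arguments. First I would use the surjectivity hypothesis $\frac{\partial F}{\partial x}(0,0)\cdot\textnormal{Cone}(C)=\mathbb{R}^{n}$ in finite dimension to select elements $x_{1},\ldots,x_{N}\in C$ with $N\geq n+1$ such that the images $L_{0}x_{i}:=\frac{\partial F}{\partial x}(0,0)x_{i}$ positively span $\mathbb{R}^{n}$ and $0$ lies in the interior of the convex hull of these images. Setting $S_{\alpha}=\alpha\,\textnormal{conv}\{x_{1},\ldots,x_{N}\}$, for $\alpha>0$ small enough one has $S_{\alpha}\subset C$, and $L_{0}(S_{\alpha})$ contains a ball $B_{\rho\alpha}(0)\subset\mathbb{R}^{n}$.

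Next, I would fix a continuous right inverse $A:\mathbb{R}^{n}\to\mathbb{R}^{j}$ of $L_{0}$ whose image sits in $\textnormal{Cone}(\{x_{1},\ldots,x_{N}\})$, and introduce, for each $\varepsilon\in\mathbb{R}^{k}_{+}$ and each target $y\in\mathbb{R}^{n}$ with $\|y\|\leq\rho\alpha/2$, the self-map
\[
\Phi_{\varepsilon,y}(x)=x+A\bigl(y-F(\varepsilon,x)\bigr)
\]
on $S_{\alpha}$. A fixed point of $\Phi_{\varepsilon,y}$ solves $F(\varepsilon,x)=y$. Using the a.e. strict differentiability of $F$ in $x$ at $0$ together with the continuity of $\frac{\partial F}{\partial x}(\varepsilon,0)$ at $\varepsilon=0$ along the dense subset, and using the fact that the remainder $g_{\varepsilon}$ vanishes uniformly on the same dense subset, one obtains, for $\alpha$ and $\varepsilon$ small enough, the estimate $\|\Phi_{\varepsilon,y}(x)-A(y)-x\|\leq\tfrac{1}{2}\alpha$ uniformly in $x\in S_{\alpha}$, so that $\Phi_{\varepsilon,y}$ maps $S_{\alpha}$ into itself. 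Brouwer's fixed point theorem then produces a solution $x(\varepsilon,y)\in S_{\alpha}$ for every \emph{good} $\varepsilon$ in the dense subset and every such $y$.

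It remains to produce a \emph{continuous} selection $h(\varepsilon,y)$ defined for all $\varepsilon\in[0,\varepsilon_{0})^{k}$. Here the uniformity in $\varepsilon$ of the remainder is essential: it gives that the set-valued map
\[
\Gamma(\varepsilon,y)=\bigl\{x\in S_{\alpha}:F(\varepsilon,x)=y\bigr\}
\]
is nonempty and upper semicontinuous on the dense subset; by the continuity of $F$ in both variables and the uniform remainder control, $\Gamma$ extends to a nonempty, compact, upper semicontinuous multifunction on $[0,\varepsilon_{0})^{k}\times\mathcal{V}$, from which a continuous selection is obtained either by a Michael-type argument after suitably convexifying or, more directly, by mimicking the Brouwer argument for every $\varepsilon$ and extending by the density hypothesis and continuity of $F$. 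This defines $h$ and yields $F(\varepsilon,h(\varepsilon,y))=y$.

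The hardest part will be Step~3: the combination of Brouwer (which gives existence but not uniqueness) with the fact that differentiability and the remainder estimate hold only for a.e. $\varepsilon$. The crux is to use the density/uniformity hypotheses to propagate continuity from a dense set of parameters to all parameters, without losing the nonemptiness of the preimages in the limit; this is precisely where the present statement strengthens Lemma~\ref{Ch6_IFT} and where the hypothesis that $\frac{\partial F}{\partial x}(\varepsilon,0)$ and $g_{\varepsilon}$ behave well on dense subsets is fully exploited.
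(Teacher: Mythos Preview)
Your Steps 1 and 2 are reasonable, but Step 3 contains a genuine gap, and the paper's proof avoids it by a different route.

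The difficulty you yourself flag is real: Brouwer's theorem gives existence of a fixed point but no uniqueness, so continuous dependence on $(\varepsilon,y)$ does not come for free. Your proposed remedy---pass to the solution multifunction $\Gamma(\varepsilon,y)$, observe it is upper semicontinuous with compact values, and extract a continuous selection ``by a Michael-type argument''---does not work as stated. Michael's selection theorem requires \emph{lower} semicontinuity and convex values; upper semicontinuous compact-valued maps do \emph{not} in general admit continuous selections (standard counterexamples exist already for maps $[0,1]\to 2^{\mathbb{R}}$). Nor does ``convexifying'' help: the fixed-point set of a Brouwer-type self-map has no reason to be convex, and convexifying it destroys the property $F(\varepsilon,x)=y$. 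The vague alternative of ``mimicking the Brouwer argument for every $\varepsilon$ and extending by density'' is precisely the problem you are trying to solve, not a solution to it.

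The paper's proof sidesteps this entirely by replacing Brouwer with the \emph{Banach} fixed point theorem. Using the surjectivity of $L=\frac{\partial F}{\partial x}(0,0)$ on $\textnormal{Cone}(C)$, one finds an $n$-dimensional subspace $S\subset\mathbb{R}^j$ on which $L|_S$ is an isomorphism, together with a vector $v\in(0,+\infty)^j\cap\ker L$ (so that a small ball in $S$, translated by $v$, stays inside $C$). One then sets
\[
\Phi(\varepsilon,y,u)=u-F\bigl(\varepsilon,\,(L|_S)^{-1}(u)\bigr)+y
\]
and uses the a.e.\ strict differentiability, the continuity of $\frac{\partial F}{\partial x}(\varepsilon,0)$ on a dense set, and the uniform smallness of the remainder on a dense set to obtain a $\tfrac12$-Lipschitz bound for $u\mapsto\Phi(\varepsilon,y,u)$, first for $\varepsilon$ in the dense set and then for all small $\varepsilon$ by continuity of $F$. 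This yields a \emph{contraction}, hence a unique fixed point depending continuously on $(\varepsilon,y)$, and the continuous selection $h$ is immediate. If you want to repair your argument, the clean fix is exactly this: strengthen the self-map estimate to a contraction estimate (which the strict-differentiability hypothesis is designed to give) and invoke Banach instead of Brouwer.
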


The proof of Lemma \ref {Ch6_IFTP} is done in Appendix \ref{appendixProofSecond}.

\subsection{Proof of Theorem \ref{Ch5_TheoMain}} \label{Ch6_SectMainProof}

From now on, assume that Assumptions $(A)$ hold. Moreover, $(x(\cdot),u(\cdot))$ will denote the (unique) solution of (\textbf{OCP}) and we assume that its related final time $t_f$ is a Lebesgue point of $u(\cdot)$ (if not, as pointed out in Section \ref{Ch6_SectNeedle}, we refer to the approach proposed by \cite{lee1967,gamkrelidze2013}). Finally, without loss of generality, we consider free final time problems (otherwise, the proof is similar, but simpler), introducing further Assumption $(C_2)$ for (\textbf{OCP})$_{\tau}$ with control delays.

\subsubsection{Controllability for (\textbf{OCP})$_{\tau}$} \label{Ch6_Controllability}

For any integer $j \geqslant 1$, fix $0<t_1<\dots<t_j<t_f$ Lebesgue points of control $u(\cdot)$ and $j$ arbitrary values $u_i \in U$. We denote $v|_n$ the first $n$ coordinates of a vector $v \in \mathbb{R}^{n+1}$. For an appropriate small positive real number $\varepsilon_j > 0$, denoting by $\tilde x_{(\varepsilon^0,\varepsilon^1,\varepsilon^2)}(\cdot)$ the trajectory solution of (\ref{Ch6_DynDelayAug}) with delay $\tau = (\varepsilon^0,\varepsilon^1,\varepsilon^2)$ and control $u_{(\varepsilon^0,\varepsilon^1,\varepsilon^2)}(\cdot)$, we define the mapping
$$
\hspace{-0.9ex} \Gamma : B_{\varepsilon_j}(0) \cap (\mathbb{R}^3_+ \times \mathbb{R} \times \mathbb{R}^j_+) \rightarrow \mathbb{R}^n : (\varepsilon^0,\varepsilon^1,\varepsilon^2,\delta,\eta_1,\dots,\eta_j) \mapsto ( \tilde x^{\pi}_{(\varepsilon^0,\varepsilon^1,\varepsilon^2)}(t_f + \delta) - \tilde x(t_f) )\big|_n
$$
which, thanks to Assumption $(A_2)$ and by continuity with respect to initial data, is well-defined and continuous. Moreover, $\Gamma(0,\dots,0) = 0$ and
$$
\Gamma(\varepsilon^0,\dots,\eta_j) = ( \tilde x^{\pi}_{(\varepsilon^0,\varepsilon^1,\varepsilon^2)}(t_f + \delta) - \tilde x_{(\varepsilon^0,\varepsilon^1,\varepsilon^2)}(t_f) )\big|_n + ( \tilde x_{(\varepsilon^0,\varepsilon^1,\varepsilon^2)}(t_f) - \tilde x(t_f) )\big|_n .
$$
From Lemma \ref{Ch6_LemmaNeedleLike} and a recursive use of the needle-like variation formula (see Section \ref{Ch6_SectProofPMPDelay}), for almost every $(\varepsilon^0,\varepsilon^1,\varepsilon^2)$ small enough, $\Gamma$ is almost everywhere strictly differentiable w.r.t. $(\delta,\eta_1,\dots,\eta_j)$ at 0, $\displaystyle \frac{\partial \Gamma}{\partial (\delta,\eta_1,\dots,\eta_j)}(\varepsilon^0,\varepsilon^1,\varepsilon^2,0)$ is continuous w.r.t. $(\varepsilon^0,\varepsilon^1,\varepsilon^2)$ on a dense subset and, moreover, the remainder of the related Taylor expansion converges to zero uniformly w.r.t. $(\varepsilon^0,\varepsilon^1,\varepsilon^2)$ on a dense subset.

From Assumption $(A_3)$, the unique extremal lift of $x(\cdot)$ is normal, hence, it follows from Lemma \ref{Ch6_EmmanuelPlane} that $\textnormal{Int} \; K^{\tau = 0}_1(t_f) = \mathbb{R}^n$. We recall that we consider only either optimal control problems with pure state delays or control-affine optimal control problems. Therefore, thanks to Remark \ref{Ch6_RemarkVariations}, there exist a real number $\delta$, an integer $j \geqslant 1$ and a variation $\pi = \{ t_1,\dots,t_j,\eta_1,\dots,\eta_j,u_1,\dots,u_j\}$ such that
$$
\frac{\partial \Gamma}{\partial (\delta,\eta_1,\dots,\eta_j)}(0)\cdot(\mathbb{R} \times \mathbb{R}^j_+) = \textnormal{Int} \; K^{\tau = 0}_1(t_f) = \mathbb{R}^n .
$$
At this step, Lemma \ref{Ch6_IFTP} implies the existence of $\varepsilon_0 > 0$ such that, for every $\tau = (\tau^0,\tau^1,\tau^2) \in [0,\varepsilon_0)^3$, there exist a real $\delta(\tau)$ and positive reals $\eta_1(\tau),\dots,\eta_j(\tau)$ such that $\Gamma(\tau^0,\tau^1,\tau^2,\delta(\tau),\eta_1(\tau),\dots,\eta_j(\tau)) = 0$. Moreover, $\delta(\tau)$, $\eta_1(\tau)$, $\dots$,$\eta_j(\tau)$ are continuous with respect to $\tau$. From Assumption $(A_4)$, it follows that, for every $\tau = (\tau^0,\tau^1,\tau^2) \in [0,\varepsilon_0)^3$, the subset $M_f$ is reachable for the dynamics of (\textbf{OCP})$_{\tau}$, in a final time $t^{\tau}_f \in [0,b]$, by using control $u^{\pi}_{(\tau^1,\tau^2)}(\cdot) \in L^{\infty}([0,t^{\tau}_f],U)$. \\

We have proved that, for every $\tau = (\tau^0,\tau^1,\tau^2) \in (0,\varepsilon_0)^3$, (\textbf{OCP})$_{\tau}$ is controllable. Remark that this argument still holds for (\textbf{OCP})$_{\tau}$ with pure state delays.

\subsubsection{Existence of Optimal Controls for (\textbf{OCP})$_{\tau}$} \label{Ch6_ExistenceSect}

We focus first on the existence of an optimal control for (\textbf{OCP})$_{\tau}$, for every  $\tau \in (0,\varepsilon_0)^3$. No other assumptions but $(A)$ and $(C_1)$ are considered. In particular, mappings $f$ and $f^0$ are affine in the two control variables. Thanks to this property, existence can be established by using the arguments in \cite[Theorem 2]{bonalli2017}. However, we prefer to develop the usual Filippov's scheme \cite{filippov1962} (following \cite{lee1967,trelat2008}) to highlight the difficulty in applying this procedure to more general systems (in particular, see Remark \ref{Ch6_RemarkGuinn}). Even if problems (\textbf{OCP})$_{\tau}$ with control and state delays are considered, we assume to have free final time just to use the same approach for problem with pure state delays. \\

Fix $\tau = (\tau^0,\tau^1,\tau^2) \in (0,\varepsilon_0)^3$ and let
$$
\alpha = \inf_{u \in \mathcal{U}^{\tau}_U} C_{\tau}(t_f(u),u) = \int_{0}^{t_f(u)} f^0(t,t-\tau^0,x(t),x(t-\tau^1),u(t),u(t-\tau^2)) \; \mathrm{d}t .
$$
Consider now a minimizing sequence of trajectories $x_k(\cdot)$ associated to $u_k(\cdot)$, that is $C_{\tau}(t_f(u_k),u_k) \rightarrow \alpha$ when $k \rightarrow \infty$ and define
$$
\tilde F_k(t) = \tilde f(t,t-\tau^0,x_k(t),x_k(t-\tau^1),u_k(t),u_k(t-\tau^2))
$$
for almost every $t \in [0,t_f(u_k)]$. By Assumption $(A_4)$, we can extend $\tilde F_k(\cdot)$ by zero on $(t_f(u_k),b]$ so that $(\tilde F_k(\cdot))_{k \in \mathbb{N}}$ is bounded in $L^{\infty}([0,b],\mathbb{R}^{n+1})$. Therefore, up to some subsequence, $(\tilde F_k(\cdot))_{k \in \mathbb{N}}$ converges to some $\tilde F(\cdot) = (F(\cdot),F^0(\cdot)) \in L^{\infty}([0,b],\mathbb{R}^{n+1})$ for the weak star topology of $L^{\infty}$. On the other hand, up to some subsequence, the sequence $(t_f(u_k))_{k \in \mathbb{N}}$ converges to some $t^{\tau}_f \geqslant 0$. Then, for every $t \in [-\Delta,t^{\tau}_f]$, define
\begin{equation} \label{Ch6_ExistenceTraj}
x_{\tau}(t) = \phi^1(t) \mathds{1}_{[-\Delta,0)}(t) + \mathds{1}_{[0,t^{\tau}_f]}(t) \bigg( \phi^1(0) + \int_{0}^{t} F(s) \; ds \bigg) .
\end{equation}
Now, $x_{\tau}(\cdot)$ is absolutely continuous and, considering continuous extensions, $(x_k(\cdot))_{k \in \mathbb{N}}$ converges pointwise to $x_{\tau}(\cdot)$ within $[-\Delta,t^{\tau}_f]$. Moreover, by Assumptions $(A_1)$, $(A_4)$ and the Arzel\`{a}-Ascoli theorem, up to some subsequence, $(x_k(\cdot))_{k \in \mathbb{N}}$ converges to $x_{\tau}(\cdot)$, uniformly in $[-\Delta,t^{\tau}_f]$. From the compactness of $M_f$, we have $x_{\tau}(t^{\tau}_f) \in M_f$. \\

In the next paragraph, we show that $x_{\tau}(\cdot)$ comes from a control in $\mathcal{U}^{\tau}_{t^{\tau}_f,U}$.

For almost every $t \in [0,t_f(u_k)]$, set
$$
\tilde H_k(t) = \tilde f(t,t-\tau^0,x_{\tau}(t),x_{\tau}(t-\tau^1),u_k(t),u_k(t-\tau^2))
$$
and, if $t_f(u_k) + \tau^2 < t^{\tau}_f$, extend it by 0 in $ (t_f(u_k),b]$. At this step, we need to introduce several structures to deal with the presence of the control delay $\tau^2$. First, let
$$
\beta = \max \Big\{ \ |f^0(t,s,x,y,u,v)| \ : \ -\Delta \leqslant t,s \leqslant b \ , \ \| (x,y) \| \leqslant b  \ , \ (u,v) \in U^2 \ \Big\} > 0
$$
and $N \in \mathbb{N}$ such that $N \tau^2 \leqslant t^{\tau}_f < (N+1) \tau^2$. Considering continuous extensions, we see that $x_{\tau}(\cdot)$ is well-defined in $[-\Delta,(N+1) \tau^2]$. We define
\begingroup
\footnotesize
\textcolor{black}{\begin{equation} \label{Ch6_RedGuinn}
\begin{split}
\tilde G&(t,u^1,\dots,u^{N+1},
\gamma^1,\dots,\gamma^{N+1}) \\
&= \left( \begin{array}{c}
f(t,t-\tau^0,x_{\tau}(t),x_{\tau}(t-\tau^1),u^1,\phi^2(t-\tau^2)) \\
f^0(t,t-\tau^0,x_{\tau}(t),x_{\tau}(t-\tau^1),u^1,\phi^2(t-\tau^2)) + \gamma^1 \\
f(t+\tau^2,t+\tau^2-\tau^0,x_{\tau}(t+\tau^2),x_{\tau}(t+\tau^2-\tau^1),u^2,u^1) \\
f^0(t+\tau^2,t+\tau^2-\tau^0,x_{\tau}(t+\tau^2),x_{\tau}(t+\tau^2-\tau^1),u^2,u^1) + \gamma^2 \\
\dots \\
f(t+N\tau^2,t+N\tau^2-\tau^0,x_{\tau}(t+N\tau^2),x_{\tau}(t+N\tau^2-\tau^1),u^{N+1},u^N) \\
f^0(t+N\tau^2,t+N\tau^2-\tau^0,x_{\tau}(t+N\tau^2),x_{\tau}(t+N\tau^2-\tau^1),u^{N+1},u^N) + \gamma^{N+1}
\end{array} \right)
\end{split}
\end{equation}}
\endgroup
almost everywhere in $[0,\tau^2]$, and
\begingroup
\footnotesize
$$
\tilde V_{\beta}(t) = \bigg\{ \ \tilde G(t,u^1,\dots,u^{N+1},\gamma^1,\dots,\gamma^{N+1}) \ : \ (u^1,\dots,u^{N+1}) \in U^{N+1} \ , \ \forall i=1, \ldots, N+1 \ : \ \gamma^i \geqslant 0 \ ,
$$
$$
\hspace{50pt} |f^0(t,t-\tau^0,x_{\tau}(t),x_{\tau}(t-\tau^1),u^1,\phi^2(t-\tau^2)) + \gamma^1| \leqslant \beta
$$
$$
\hspace{45pt} \forall i=1, \ldots, N \ : \ |f^0(t+i\tau^2,t+i\tau^2-\tau^0,x_{\tau}(t+i\tau^2),x_{\tau}(t+i\tau^2-\tau^1),u^{i+1},u^{i}) + \gamma^{i+1}| \leqslant \beta \ \bigg\} .
$$
\endgroup
Thanks to Assumption $(A_1)$, $\tilde V_{\beta}(t)$ is compact for the standard topology of \begingroup\small$\mathbb{R}^{(n+1)(N+1)}$\endgroup. Moreover, Assumptions $(A_1)$ and $(C_1)$ ensure that $\tilde V_{\beta}(t)$ is convex. We have that
$$
\tilde{\mathcal{V}} = \Big\{ \ \tilde G(\cdot) \in L^2([0,\tau^2],\mathbb{R}^{(n+1)(N+1)}) \ : \ \tilde G(t) \in \tilde V_{\beta}(t) \ , \ \textnormal{ a.e. } [0,\tau^2] \ \Big\} .
$$
is convex and closed in $L^2([0,\tau^2],\mathbb{R}^{(n+1)(N+1)})$ for the strong topology of $L^2$, and therefore, it is convex and closed in $L^2([0,\tau^2],\mathbb{R}^{(n+1)(N+1)})$ for the weak topology of $L^2$. At this step, for every $i=0,\dots,N$, denote
$$
\tilde G^{i+1}_k(t) = \tilde f(t+i\tau^2,t+i\tau^2-\tau^0,x_{\tau}(t+i\tau^2),x_{\tau}(t+i\tau^2-\tau^1),u_k(t+i\tau^2),u_k(t+(i-1)\tau^2))
$$
and $\tilde G_k(t) = (\tilde G^{1}_k(t),\dots,\tilde G^{N+1}_k(t))$. Therefore, $\tilde G_k(\cdot) \in \tilde{\mathcal{V}}$ for every $k \in \mathbb{N}$. Moreover, since $(\tilde G_k(\cdot))_{k \in \mathbb{N}}$ is bounded in $L^2([0,\tau^2],\mathbb{R}^{(n+1)(N+1)})$, up to some subsequence, it converges for the weak topology of $L^2$ to a function $\tilde G(\cdot)$ that necessarily belongs to $\tilde{\mathcal{V}}$. Therefore, for almost every $t \in [0,\tau^2]$ and $i=1,\dots,N+1$, there exist points $u^i_{\tau}(t) \in U$, and scalar $\gamma^i_{\tau}(t) \geqslant 0$ such that
\begingroup
$$
\tilde G^{1}(t) = \left( \begin{array}{c}
f(t,t-\tau^0,x_{\tau}(t),x_{\tau}(t-\tau^1),u^{1}_{\tau}(t),\phi^2(t-\tau^2)) \\
f^0(t,t-\tau^0,x_{\tau}(t),x_{\tau}(t-\tau^1),u^{1}_{\tau}(t),\phi^2(t-\tau^2)) + \gamma^1_{\tau}(t)
\end{array} \right)
$$
\endgroup
and, for every $i=1,\dots,N$,
\begingroup
\footnotesize
$$
\tilde G^{i+1}(t) = \left( \begin{array}{c}
f(t+i\tau^2,t+i\tau^2-\tau^0,x_{\tau}(t+i\tau^2),x_{\tau}(t+i\tau^2-\tau^1),u^{i+1}_{\tau}(t),u^{i}_{\tau}(t)) \\
f^0(t+i\tau^2,t+i\tau^2-\tau^0,x_{\tau}(t+i\tau^2),x_{\tau}(t+i\tau^2-\tau^1),u^{i+1}_{\tau}(t),u^{i}_{\tau}(t)) + \gamma^{i+1}_{\tau}(t)
\end{array} \right) .
$$
\endgroup
Moreover, since $U$ is compact, functions $u^i_{\tau}(\cdot)$, $\gamma^i_{\tau}(\cdot)$ can be chosen to be measurable on $[0,\tau^2]$ using a measurable selection lemma (see, e.g., \cite[Lemma 3A, page 161]{lee1967}).

At this step, we come back to the whole interval $[-\tau^2,t^{\tau}_f]$. For this, set
\begingroup
$$
u_{\tau}(t) = \bigg\{ \begin{array}{cc}
\phi^2(t) & t \in [-\tau^2,0] \ , \\
u^{i+1}_{\tau}(t-i\tau^2) & t \in [i\tau^2,(i+1)\tau^2] \ , \ i=0,\dots,N
\end{array}
$$
$$
\gamma_{\tau}(t) = \gamma^{i+1}_{\tau}(t-i\tau^2) \quad t \in [i\tau^2,(i+1)\tau^2] \ , \ i=0,\dots,N
$$
\endgroup
which are measurable functions in $[-\tau^2,t^{\tau}_f]$, and let
$$
\tilde H(t) = \left( \begin{array}{c}
f(t,t-\tau^0,x_{\tau}(t),x_{\tau}(t-\tau^1),u_{\tau}(t),u_{\tau}(t-\tau^2)) \\
f^0(t,t-\tau^0,x_{\tau}(t),x_{\tau}(t-\tau^1),u_{\tau}(t),u_{\tau}(t-\tau^2)) + \gamma_{\tau}(t)
\end{array} \right) .
$$
From the weak star convergence in $L^{\infty}$ of $(\tilde G_k(\cdot))_{k \in \mathbb{N}}$ towards $\tilde G(\cdot)$, it follows immediately that $(\tilde H_k(\cdot))_{k \in \mathbb{N}}$ converges to $\tilde H(\cdot)$ for the weak topology of $L^2$. Furthermore, from the differentiability of $\tilde f$ w.r.t. $(x,y)$, the compactness of $U$ and the dominated convergence theorem, one has
$$
\underset{k \rightarrow \infty}{\lim} \ \int_{0}^{t^{\tau}_f} \Big( \tilde F_k(t) - \tilde H_k(t) \Big) \cdot \varphi(t) \; \mathrm{d}t = 0
$$
for every map $\varphi(\cdot) \in L^2([0,t^{\tau}_f],\mathbb{R}^{n+1})$, from which $\tilde H = \tilde F$ almost everywhere in $[0,t^{\tau}_f]$.

Combining (\ref{Ch6_ExistenceTraj}) with all the previous results, we obtain
\begingroup
\small
$$
\hspace{-3.5pt} x_{\tau}(t) = \phi^1(t) \mathds{1}_{[-\Delta,0)}(t) + \mathds{1}_{[0,t^{\tau}_f]}(t) \bigg( \phi^1(0) + \int_{0}^{t} f(t,t-\tau^0,x_{\tau}(t),x_{\tau}(t-\tau^1),u_{\tau}(t),u_{\tau}(t-\tau^2)) \; ds \bigg)
$$
\endgroup
which proves that the measurable function $u_{\tau}(\cdot)$ is an admissible control for (\textbf{OCP})$_{\tau}$. \\

It remains to show that control $u_{\tau}(\cdot)$ is optimal for (\textbf{OCP})$_{\tau}$. For this, from what we showed above and by definition of weak star convergence, we have
$$
C_{\tau}(t_f(u_k),u_k) \rightarrow \int_{0}^{t^{\tau}_f} \big( f^0(t,t-\tau^0,x_{\tau}(t),x_{\tau}(t-\tau^1),u_{\tau}(t),u_{\tau}(t-\tau^2)) + \gamma_{\tau}(t) \big) \; \mathrm{d}t .
$$
Since $\gamma_{\tau}(\cdot)$ takes only non-negative values, one finally has
$$
\int_{0}^{t^{\tau}_f} f^0(t,t-\tau^0,x_{\tau}(t),x_{\tau}(t-\tau^1),u_{\tau}(t),u_{\tau}(t-\tau^2)) \; \mathrm{d}t \leqslant \alpha \leqslant C_{\tau}(t_f(v),v)
$$
for every $v(\cdot) \in \mathcal{U}^{\tau}_{U}$. Therefore, $\gamma_{\tau}(\cdot)$ is necessarily zero and the conclusion follows. \\

Now, we consider problems (\textbf{OCP})$_{\tau}$ with pure state delays. It is clear that, if Assumption $(B_2)$ holds, one can proceed with the same argument as above (which is nothing else but the usual Filippov's scheme, see, e.g., \cite{filippov1962}) for the existence of optimal controls. In this case, Guinn's reduction (\ref{Ch6_RedGuinn}) is not needed.

\begin{remark} \label{Ch6_RemarkGuinn}
Guinn's reduction (\ref{Ch6_RedGuinn}) converts the dynamics with control delays into a new dynamics without control delays but with a larger number of variables. It is clear from the context, that the natural assumption to provide the existence of optimal controls for generic nonlinear dynamics is the convexity of system (\ref{Ch6_RedGuinn}) for every $N \in \mathbb{N}$ (since we make delays vary), which is a very strong assumption. The proof of Lemma 2.1 in \cite{nababan1979} does not work under the weaker assumption of convexity of the epigraph of the extended dynamics.
\end{remark}

\subsubsection{Convergence of Optimal Controls and Trajectories for (\textbf{OCP})$_{\tau}$} \label{Ch6_SectionConvTraj}

We start by considering problems (\textbf{OCP})$_{\tau}$ with pure state delays, by assuming that Assumption $(B_2)$ holds. In this case, the classical way to proceed consists in reproducing and adapting the convexity Filippov's scheme used in the previous section for the existence of optimal controls (see, e.g., \cite{haberkorn2011}). \\

Let $(\tau_k)_{k \in \mathbb{N}} = ((\tau^0_k,\tau^1_k,0))_{k \in \mathbb{N}} \subseteq (0,\varepsilon_0)^2 \times \{ 0 \}$ be an arbitrary sequence converging to 0 as $k$ tends to $\infty$ and let $(x_{\tau_k}(\cdot),u_{\tau_k}(\cdot))$ be an optimal solution for (\textbf{OCP})$_{\tau_k}$ with final time $t^{\tau_k}_f(u_{\tau_k})$. Since $t^{\tau_k}_f(u_{\tau_k}) \in [0,b]$, up to some subsequence, the sequence $(t^{\tau_k}_f)_{k \in \mathbb{N}} = (t^{\tau_k}_f(u_{\tau_k}))_{k \in \mathbb{N}}$ converges to some $\bar t_f \in [0,b]$. Since $M_f$ is compact, up to some subsequence, the sequence $(x_{\tau_k}(t^{\tau_k}_f))_{k \in \mathbb{N}} \subseteq M_f$ converges to some point in $M_f$.

For every integer $k$ and almost every $t \in [0,t^{\tau_k}_f]$, set
\begingroup
$$
\hspace{0pt} \tilde G_k(t) = \bigg( \tilde f(t,t-\tau^0_k,x_{\tau_k}(t),x_{\tau_k}(t-\tau^1_k),u_{\tau_k}(t)) , \frac{\partial \tilde f}{\partial x}(t,t-\tau^0_k,x_{\tau_k}(t),x_{\tau_k}(t-\tau^1_k),u_{\tau_k}(t)) ,
$$
$$
\hspace{200pt} \frac{\partial \tilde f}{\partial y}(t,t-\tau^0_k,x_{\tau_k}(t),x_{\tau_k}(t-\tau^1_k),u_{\tau_k}(t)) \bigg) .
$$
\endgroup
Thanks to Assumption $(A_4)$, we extend $\tilde G_k(\cdot)$ by zero on $(t^{\tau_k}_f,b]$. Assumptions $(A_1)$ and $(A_4)$ imply that the sequence $(\tilde G_k(\cdot))_{k \in \mathbb{N}}$ is bounded in $L^{\infty}$, then, up to some subsequence, it converges to some $\tilde G(\cdot) = (G(\cdot),G^0(\cdot),G_x(\cdot),G_y(\cdot)) \in L^{\infty}([0,b],\mathbb{R}^{n+1})$ for the weak star topology of $L^{\infty}$. Exploiting the weak star convergence of $L^{\infty}$ (and using $\mathds{1}_{[\bar t_f,b]} \tilde G$ as test function), we get that $\tilde G(t) = 0$ for almost every $t \in [\bar t_f,b]$. From this, for every $t \in [0,\bar t_f]$, denote
\begin{equation} \label{Ch6_TrajZero}
\bar x(t) = \phi^1(t) \mathds{1}_{[-\Delta,0)}(t) + \mathds{1}_{[0,\bar t_f]}(t) \bigg( \phi^1(0) + \int_{0}^{t} G(s) \; ds \bigg) .
\end{equation}
Clearly, $\bar x(\cdot)$ is absolutely continuous and $\bar x(t) = \underset{k \rightarrow \infty}{\lim} x_{\tau_k}(t)$ pointwise in $[-\Delta,\bar t_f]$. By Assumptions $(A_1)$, $(A_4)$ and the Arzel\`{a}-Ascoli theorem, up to some subsequence, $(x_{\tau_k}(\cdot))_{k \in \mathbb{N}}$ converges to $\bar x(\cdot)$, uniformly in $[-\Delta,\bar t_f]$, and we have $\bar x(\bar t_f) \in M_f$. \\

In the next paragraph, we prove that there exists a control $\bar u(\cdot) \in L^{\infty}([0,\bar t_f],U)$ such that $\bar x(\cdot)$ is an admissible trajectory for (\textbf{OCP}) associated with this control $\bar u(\cdot)$.

Using the definition of $\beta$ given in Section \ref{Ch6_ExistenceSect}, for $t \in [0,\bar t_f]$, consider the set
\begingroup
\footnotesize
$$
\hspace{0pt} \tilde Z_{\beta}(t) = \bigg\{ \left(f(t,t,\bar x(t),\bar x(t),u),f^0(t,t,\bar x(t),\bar x(t),u)+\gamma,\frac{\partial \tilde f}{\partial x}(t,t,\bar x(t),\bar x(t),u),\frac{\partial \tilde f}{\partial y}(t,t,\bar x(t),\bar x(t),u)\right) :
$$
$$
\hspace{190pt} u \in U \ , \ \gamma \geqslant 0 \ , \ | f^0(t,t,\bar x(t),\bar x(t),u) + \gamma | \leqslant \beta \bigg\} .
$$
\endgroup
Thanks to Assumption $(B_2)$, the set $\tilde Z_{\beta}(t)$ is compact and convex in $\mathbb{R}^{n+1}$. We have the following statements:
\begin{itemize}
\item From the convexity and the compactness of $\tilde Z_{\beta}(t)$, for $\delta > 0$ and $t \in [0,\bar t_f]$,
$$
\tilde Z^{\delta}_{\beta}(t) = \bigg\{ \ x \in \mathbb{R}^{n+1} \ : \ d(x,\tilde Z_{\beta}(t)) \leqslant \delta \ \bigg\} , \ \textnormal{where} \ \  d(x,A) = \underset{y \in A}{\inf} \ \| x - y \|
$$
is convex and compact for the standard topology of $\mathbb{R}^{n+1}$.

\item For every $\delta > 0$, the set
$$
\tilde{\mathcal{Z}}_{\delta} = \bigg\{ \ \tilde  F(\cdot) \in L^2([0,\bar t_f],\mathbb{R}^{n+1}) \ : \ \tilde F(t) \in \tilde  Z^{\delta}_{\beta}(t) \ \textnormal{for almost every} \ t \in [0,\bar t_f] \ \bigg\}
$$
is convex and closed in $L^2([0,\bar t_f],\mathbb{R}^{n+1})$ for the strong topology of $L^2$. Then, it is closed in $L^2([0,\bar t_f],\mathbb{R}^{n+1})$ for the weak topology of $L^2$.

Convexity is obvious from the previous statement. Let $(\tilde F_k(\cdot))_{k \in \mathbb{N}} \subseteq \tilde{\mathcal{Z}}_{\delta}$ such that $\tilde F_k(\cdot) \xrightarrow{L^2} \tilde F(\cdot)$. Therefore, $\tilde F(\cdot) \in L^2([0,\bar t_f],\mathbb{R}^{n+1})$ and there exists a subsequence such that $\tilde F_{k_m}(\cdot) \xrightarrow{a.e} \tilde F(\cdot)$. Since $\tilde Z^{\delta}_{\beta}(t)$ is closed for the standard topology of $\mathbb{R}^{n+1}$, a.e. in $t \in [0, \bar t_f]$, we have that $\displaystyle \tilde F(t) = \lim_{m \rightarrow \infty} \tilde F_{k_m}(t) \in \tilde Z^{\delta}_{\beta}(t)$ and the statement follows.

\item For every $\delta > 0$, there exists $k_{\delta} \in \mathbb{N}$ such that, if $k \geqslant k_{\delta}$, then $\tilde G_k(\cdot) \in \tilde{\mathcal{Z}}_{\delta}$.

Indeed, thanks to Assumptions $(A_1)$, $(A_4)$, mappings $f$, $f^0$ are globally Lipschitz within $[-\Delta,\bar t_f]^2 \times \overline{B^{2 n}_b(0)} \times U$ and, for almost every $t \in [0,\bar t_f]$
$$
\displaystyle \underset{z \in \tilde Z_{\beta}(t)}{\inf} \| \tilde G_k(t) - z \| \leqslant \tilde C \Big( \| x_{\tau_k}(t) - \bar x(t) \| + \| x_{\tau_k}(t - \tau^1_k) - \bar x(t) \| + \tau^0_k \Big)
$$
where $\tilde C > 0$ is a suitable constant, which is independent from $t$. The conclusion follows from the uniform convergence of $(x_{\tau_k}(\cdot))_{k \in \mathbb{N}}$ towards $\bar x(\cdot)$.
\end{itemize}
Using the closeness of $\tilde{\mathcal{Z}}_{\delta}$ with respect to the weak topology of $L^2$, we infer that $\tilde G(\cdot) \in \tilde{\mathcal{Z}}_{\delta}$ for every $\delta > 0$. This implies $\tilde G(\cdot) \in \underset{j \in \mathbb{N}}{\cap} \tilde{\mathcal{Z}}_{1/j} \subseteq \tilde{\mathcal{Z}}_0$.

We have obtained that, a.e. in $t \in [0,\bar t_f]$, there exist $\bar u(t) \in U$, $\bar \gamma(t) \geqslant 0$ such that
\begingroup
\begin{equation} \label{Ch6_GConvergence}
\hspace{-40pt} \tilde G(t) = \bigg( f(t,t,\bar x(t),\bar x(t),\bar u(t)) \ , \ f^0(t,t,\bar x(t),\bar x(t),\bar u(t)) + \bar \gamma(t) \ ,
\end{equation}
$$
\hspace{150pt} \frac{\partial \tilde f}{\partial x}(t,t,\bar x(t),\bar x(t),\bar u(t)) \ , \ \frac{\partial \tilde f}{\partial y}(t,t,\bar x(t),\bar x(t),\bar u(t)) \bigg) .
$$
\endgroup
Moreover, since $U$ is compact, functions $\bar u(\cdot)$, $\bar \gamma(\cdot)$ can be chosen to be measurable on $[0,\bar t_f]$ using a measurable selection lemma (see, e.g., \cite[Lemma 3A, page 161]{lee1967}). Combining (\ref{Ch6_GConvergence}) with (\ref{Ch6_TrajZero}) provides
$$
\bar x(t) = \phi^1(t) \mathds{1}_{[-\Delta,0)}(t) + \mathds{1}_{[0,\bar t_f]}(t) \bigg( \phi^1(0) + \int_{0}^{t} f(s,s,\bar x(s),\bar x(s),\bar u(s)) \; ds \bigg)
$$
which proves that the function $\bar u(\cdot)$ is an admissible control for (\textbf{OCP}). \\

In order to conclude, it remains to show that $\bar t_f = t_f$, $\bar u(\cdot) = u(\cdot)$ and $\bar x(\cdot) = x(\cdot)$.

First, the previous argument shows that
$$
C_{\tau_k}(t^{\tau_k}_f,u_{\tau_k}) \rightarrow C_{0}(\bar t_f,\bar u) + \int_{0}^{\bar t_f} \bar \gamma(t) \; \mathrm{d}t .
$$
Thanks to the construction of the mapping $\Gamma$ in Section \ref{Ch6_Controllability}, for every integer $k$, there exists a sequence $(t^k_f,v_k(\cdot),y_k(\cdot))_{k \in \mathbb{N}}$, respectively of final times, of admissible controls and of trajectories for (\textbf{OCP})$_{\tau_k}$, which converges to $(t_f,u(\cdot),x(\cdot))$ (for the evident topologies) as $k$ tends to $\infty$. Thanks to the optimality of each $u_{\tau_k}(\cdot)$, we have $C_{\tau_k}(t^{\tau_k}_f,u_{\tau_k}) \leqslant C_{\tau_k}(t^k_f,v_k)$ and, since $\bar \gamma(t) \geqslant 0$, passing to the limit gives $ C_{0}(\bar t_f,\bar u) \leqslant  C_{0}(t_f,u)$. From Assumption $(A_2)$, we infer $\bar t_f = t_f$, $\bar u(\cdot) = u(\cdot)$, $\bar x(\cdot) = x(\cdot)$.

Remark that, from the previous argument, the following weak convergences hold
\begingroup
\begin{eqnarray} \label{Ch6_DerDynConv}
\begin{cases}
\displaystyle \frac{\partial \tilde f}{\partial x}(\cdot,\cdot-\tau^0_k,x_{\tau_k}(\cdot),x_{\tau_k}(\cdot-\tau^1_k),u_{\tau_k}(\cdot)) \overset{(L^{\infty})^*}{\rightharpoonup} \frac{\partial \tilde f}{\partial x}(\cdot,\cdot,x(\cdot),x(\cdot),u(\cdot)) \medskip \\
\displaystyle \frac{\partial \tilde f}{\partial y}(\cdot,\cdot-\tau^0_k,x_{\tau_k}(\cdot),x_{\tau_k}(\cdot-\tau^1_k),u_{\tau_k}(\cdot)) \overset{(L^{\infty})^*}{\rightharpoonup} \frac{\partial \tilde f}{\partial y}(\cdot,\cdot,x(\cdot),x(\cdot),u(\cdot)) \ .
\end{cases}
\end{eqnarray}
\endgroup

\vspace{10pt}

Consider now (\textbf{OCP})$_{\tau}$ with control and state delays satisfying Assumption $(C_1)$. Since the mappings are control-affine, the previous argument simplifies considerably, because we can transpose the weak convergence directly on controls. Adapting these proofs to much more general systems is very challenging. We adopt free final time to show that, for this step, no problems arise if Assumption $(C_2)$ does not hold. \\

Let $(\tau_k)_{k \in \mathbb{N}} = ((\tau^0_k,\tau^1_k,\tau^2_k))_{k \in \mathbb{N}} \subseteq (0,\varepsilon_0)^3$ an arbitrary sequence of delays converging to 0 as $k$ tends to $\infty$ and let $(x_{\tau_k}(\cdot),u_{\tau_k}(\cdot))$ be an optimal solution of (\textbf{OCP})$_{\tau_k}$ with final time $t^{\tau_k}_f(u_{\tau_k})$. Since $t^{\tau_k}_f(u_{\tau_k}) \in [0,b]$, up to some subsequence, the sequence of final times $(t^{\tau_k}_f)_{k \in \mathbb{N}} = (t^{\tau_k}_f(u_{\tau_k}))_{k \in \mathbb{N}}$ converges to some $\bar t_f \in [0,b]$. Since $M_f$ is compact, up to some subsequence, the sequence
$(x_{\tau_k}(t^{\tau_k}_f))_{k \in \mathbb{N}} \subseteq M_f$ converges to a point in $M_f$. \\

On the other hand, thanks to Assumption $(A_1)$, the sequence $(u_{\tau_k}(\cdot))_{k \in \mathbb{N}}$ is bounded in $L^2([-\Delta,\bar t_f],\mathbb{R}^m)$. Therefore, up to some subsequence, $(u_{\tau_k}(\cdot))_{k \in \mathbb{N}}$ converges to some $\bar u(\cdot) \in L^2([-\Delta,\bar t_f],\mathbb{R}^m)$ for the weak topology of $L^2$. More precisely, it holds $\bar u(\cdot) \in L^{\infty}([-\Delta,\bar t_f],U)$. Indeed, $(u_{\tau_k}(\cdot))_{k \in \mathbb{N}} \subseteq L^2([-\Delta,\bar t_f],U)$ and, thanks to Assumption $(A_1)$, the set $L^2([-\Delta,\bar t_f],U)$ is closed and convex for the strong topology of $L^2$. Therefore, it is closed and convex for the weak topology of $L^2$, from which $\bar u(\cdot) \in L^2([-\Delta,\bar t_f],U) \subseteq L^{\infty}([-\Delta,\bar t_f],U)$ (the last inclusion still follows from $(A_1)$).

At this step, one crucial result is the weak convergence in $L^2$ of the sequence $(u_{\tau_k}(\cdot - \tau^2_k))_{k \in \mathbb{N}}$ towards control $\bar u(\cdot)$. To see this, consider the shift operator
$$
S_{\tau^2} : L^2(\mathbb{R},\mathbb{R}^m) \rightarrow L^2(\mathbb{R},\mathbb{R}^m) : \Big( t \mapsto \phi(t) \Big) \mapsto \Big( t \mapsto \phi(t - \tau^2) \Big) .
$$
Using the dominated convergence theorem, it is clear that, for every $\phi(\cdot) \in L^2(\mathbb{R},\mathbb{R}^m)$, it holds $\| S_{\tau^2} \phi - \phi \|_{L^2} \rightarrow 0$ as soon as $\tau^2 \rightarrow 0$. At this point, extend $u_{\tau_k}(\cdot)$, $u_{\tau_k}(\cdot-\tau^2_k)$ and $\bar u(\cdot)$ by zero outside of $[-\Delta,\bar t_f]$. For every $\varphi(\cdot) \in L^2(\mathbb{R},\mathbb{R}^m)$, one obtains
\begingroup
\small
$$
\int_{0}^{\bar t_f} (u_{\tau_k}(t-\tau_k) - \bar u(t)) \cdot \varphi(t) \; \mathrm{d}t = \int_{\mathbb{R}} (u_{\tau_k}(t) - \bar u(t)) \cdot \Big( S_{-\tau^2_k} \varphi \Big)(t) \; \mathrm{d}t + \int_{\mathbb{R}} (S_{\tau^2_k} \bar u - \bar u)(t) \cdot \varphi(t) \; \mathrm{d}t
$$
\begin{equation} \label{Ch6_ShiftExpression}
= \int_{0}^{\bar t_f} (u_{\tau_k}(t) - \bar u(t)) \cdot \varphi(t) \; \mathrm{d}t + \int_{\mathbb{R}} (u_{\tau_k}(t) - \bar u(t)) \cdot \Big( S_{-\tau^2_k} \varphi - \varphi \Big)(t) \; \mathrm{d}t + \int_{\mathbb{R}} (S_{\tau^2_k} \bar u - \bar u)(t) \cdot \varphi(t) \; \mathrm{d}t
\end{equation}
\endgroup
which converges to 0, providing the weak convergence in $L^2$ of $(u_{\tau_k}(\cdot - \tau^2_k))_{k \in \mathbb{N}}$ to $\bar u(\cdot)$. \\

We can now show that, under Assumption $(C_1)$, the trajectory arising from control $\bar u(\cdot)$ is admissible for problem (\textbf{OCP}), proceeding as follows. First, remark that, up to continuous extensions, for every $k$, we have
\begingroup
\footnotesize
\begin{equation} \label{Ch6_ExprDynDelay}
\hspace{-5.5pt} x_{\tau_k}(t) = \phi^1(t) \mathds{1}_{[-\Delta,0)}(t) + \mathds{1}_{[0,\bar t_f]}(t) \bigg( \phi^1(0) + \int_{0}^{t} f(s,s-\tau^0_k,x_{\tau_k}(s),x_{\tau_k}(s-\tau^1_k),u_{\tau_k}(s),u_{\tau_k}(s-\tau^2_k)) \; ds \bigg) .
\end{equation}
\endgroup
From this, Assumptions $(A_1)$, $(A_4)$ ensure that $(x_{\tau_k}(\cdot))_{k \in \mathbb{N}}$ is bounded in $H^1$, and then, it converges to some $\bar x(\cdot) \in H^1([-\Delta,\bar t_f],\mathbb{R}^n)$ for the weak topology of $H^1$. Since the immersion of $H^1$ into $C^0$ is compact, up to a subsequence, $(x_{\tau_k}(\cdot))_{k \in \mathbb{N}}$ converges to $\bar x(\cdot) \in C^0([-\Delta,\bar t_f],\mathbb{R}^n)$ uniformly in $[-\Delta,\bar t_f]$. Passing to the limit in (\ref{Ch6_ExprDynDelay}) gives
$$
\bar x(t) = \phi^1(t) \mathds{1}_{[-\Delta,0)}(t) + \mathds{1}_{[0,\bar t_f]}(t) \bigg( \phi^1(0) + \int_{0}^{t} f(s,\bar x(s),\bar x(s),\bar u(s),\bar u(s)) \; ds \bigg) .
$$
In particular, one has $\bar x(\bar t_f) \in M_f$, and then, $\bar u(\cdot)$ is admissible for (\textbf{OCP}).

Similarly to the previous case, thanks to the achieved convergences and Assumption $(C_1)$, one proves that $ C_{0}(\bar t_f,\bar u) \leqslant  C_{0}(t_f,u)$. Therefore, from Assumption $(A_2)$, we infer that $\bar t_f = t_f$, $\bar u(\cdot) = u(\cdot)$ and $\bar x(\cdot) = x(\cdot)$ and the conclusion follows.

In this case, not only we have weak convergence of the dynamics and of their derivatives, but also of optimal controls (under appropriate topologies). \\

The convergence almost everywhere of the optimal controls can be achieved when the second option of Assumption $(C_3)$ holds, and more specifically, when $u(\cdot)$ assumes its values at extremal points of $U$, almost everywhere in $[-\Delta,t_f]$.

We proceed as follows. The previous computations provide that $(u_{\tau_k}(\cdot))_{k \in \mathbb{N}}$ converges to $u(\cdot)$ for the weak topology of $L^2$. At this step, the fact that control $u(\cdot)$ assumes its values at extremal points of $U$, almost everywhere in $[-\Delta,t_f]$, implies that $(u_{\tau_k}(\cdot))_{k \in \mathbb{N}}$ converges to $u(\cdot)$ for the strong topology of $L^1$ (see \cite[Corollary 1]{visintin1984}). Therefore, up to some subsequence, $(u_{\tau_k}(\cdot))_{k \in \mathbb{N}}$ converges to $u(\cdot)$, a.e. in $[-\Delta,t_f]$.

\begin{remark} \label{Ch6_RemarkShift}
Up to some subsequence, thanks to the computations in (\ref{Ch6_ShiftExpression}), both $(u_{\tau_k}(\cdot-\tau^2_k))_{k \in \mathbb{N}}$ and $(u_{\tau_k}(\cdot+\tau^2_k))_{k \in \mathbb{N}}$ converges to $u(\cdot)$, almost everywhere in $[-\Delta,t_f]$.
\end{remark}

We have shown that $(t_f,x(\cdot),u(\cdot))$ (substituted by $(t_f,x(\cdot),\dot{x}(\cdot))$ for the case of pure state delays) is the unique closure point (for the topologies used above) of $(t^{\tau_k}_f,x_{\tau_k}(\cdot),u_{\tau_k}(\cdot))_{k \in \mathbb{N}}$ (substituted by $(t^{\tau_k}_f,x_{\tau_k}(\cdot),\dot{x}_{\tau_k}(\cdot))_{k \in \mathbb{N}}$ for the cases of pure state delays), for any (sub)sequence of delays $(\tau_k)_{k \in \mathbb{N}}$ converging to 0. Then, convergence holds as well for the whole family $(t^{\tau}_f,x_{\tau}(\cdot),u_{\tau}(\cdot))_{\tau \in (0,\varepsilon_0)^3}$ (substituted by $(t^{\tau}_f,x_{\tau}(\cdot),\dot{x}_{\tau}(\cdot))_{\tau \in (0,\varepsilon_0)^3}$ for the cases of pure state delays).

\subsubsection{Convergence of Optimal Adjoint Vectors for (\textbf{OCP})$_{\tau}$} \label{Ch6_ConvAdjoint}

In what follows, $(x_{\tau}(\cdot),u_{\tau}(\cdot))$ will denote an optimal solution for (\textbf{OCP})$_{\tau}$ defined in the interval $[-\Delta,t^{\tau}_f]$ such that, if needed, it is extended continuously in $[-\Delta,t_f]$. From the Maximum Principle related to (\textbf{OCP})$_{\tau}$, the trajectory $x_{\tau}(\cdot)$ is the projection of an extremal $(x_{\tau}(\cdot),p_{\tau}(\cdot),p^0_{\tau},u_{\tau}(\cdot))$ which satisfies equations (\ref{Ch5_Adjoint}). From now on, we consider that either Assumptions $(B)$ or Assumptions $(C)$ are satisfied, depending on whether we consider pure state delays or not. The main step of this part consists in showing the convergence of the Pontryagin cone of (\textbf{OCP})$_{\tau}$ to the Pontryagin cone of (\textbf{OCP}). Since the definition of variation vectors relies on Lebesgue points of optimal controls, we need first a set of converging Lebesgue points. Finally, for sake of concision, we do not consider final conditions on the state. Recovering the desired convergence results equipped with transversality conditions can be easily done by traveling back the arguments that follow, and using Assumption $(A_1)$.

\begin{lemma} \label{Ch6_Lemma1}
Consider (\textbf{OCP})$_{\tau}$ with pure state delays and assume that Assumption $(B_1)$ holds. For every $s \in (0,t_f)$, Lebesgue point of function $\tilde f(\cdot,x(\cdot),x(\cdot),u(\cdot))$, there exists a family $(s_{\tau})_{(\tau^0,\tau^1) \in (0,\varepsilon_0)^2} \subseteq [s,t_f)$, which are Lebesgue points of function $\tilde f(\cdot,\cdot-\tau^0,x_{\tau}(\cdot),x_{\tau}(\cdot-\tau^1),u_{\tau}(\cdot))$, such that
$$
\tilde f(s_{\tau},s_{\tau}-\tau^0,x_{\tau}(s_{\tau}),x_{\tau}(s_{\tau}-\tau^1),u_{\tau}(s_{\tau})) \xrightarrow{\tau \rightarrow 0} \tilde f(s,s,x(s),x(s),u(s)) \quad , \quad s_{\tau} \xrightarrow{\tau \rightarrow 0} s .
$$
Conversely, consider (\textbf{OCP})$_{\tau}$ with general delays $\tau = (\tau^0,\tau^1,\tau^2) \in (0,\varepsilon_0)^3$ and assume that Assumption $(C_3)$ holds. For every $s \in (0,t_f)$, Lebesgue point of $u(\cdot)$, there exists a family $(s_{\tau})_{\tau \in (0,\varepsilon_0)^3} \subseteq [s,t_f)$, which are Lebesgue points of $u_{\tau}(\cdot)$, of $u_{\tau}(\cdot-\tau^2)$ and of $u_{\tau}(\cdot+\tau^2)$, such that
$$
u_{\tau}(s_{\tau}) \xrightarrow{\tau \rightarrow 0} u(s) \ , \ u_{\tau}(s_{\tau}-\tau^2) \xrightarrow{\tau \rightarrow 0} u(s) \ , \ u_{\tau}(s_{\tau}+\tau^2) \xrightarrow{\tau \rightarrow 0} u(s) \ , \ s_{\tau} \xrightarrow{\tau \rightarrow 0} s .
$$
\end{lemma}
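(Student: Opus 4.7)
My plan is to separate the two parts of the lemma exactly as the statement separates them, because the continuous and bang-bang subcases require genuinely different tools. The unifying theme is that under the continuity hypotheses $(B_1)$ and the first alternative of $(C_3)$, every point of the time interval is trivially a Lebesgue point, so only the pointwise convergence of values needs work; under the second alternative of $(C_3)$, by contrast, we first upgrade weak to almost-everywhere convergence and then select $s_\tau$ by a measure-theoretic argument.

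For Part 1 and for the first alternative of $(C_3)$ in Part 2, under continuity of $u$ and of every $u_\tau$, introduce
\[
\tilde G_\tau(t) = \tilde f(t, t-\tau^0, x_\tau(t), x_\tau(t-\tau^1), u_\tau(t)), \qquad \tilde G(t) = \tilde f(t, t, x(t), x(t), u(t)),
\]
both of which are continuous, making the Lebesgue-point condition on $s_\tau$ vacuous. The task then reduces to producing $s_\tau \to s$ with $\tilde G_\tau(s_\tau) \to \tilde G(s)$ (Part 1), respectively with $u_\tau(s_\tau)$ and $u_\tau(s_\tau \pm \tau^2) \to u(s)$ (Part 2). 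Given $\varepsilon > 0$, continuity of $u$ at $s$ provides $h > 0$ such that $u$ (and all its shifts by small $\tau^2$) oscillates by at most $\varepsilon$ on $[s, s+h]$, so its integral average over that interval is $\varepsilon$-close to $u(s)$. From Section \ref{Ch6_SectionConvTraj}, $u_\tau \rightharpoonup u$ weakly in $L^2$ and $x_\tau \to x$ uniformly, so the integral averages of $u_\tau$, $u_\tau(\cdot - \tau^2)$, $u_\tau(\cdot + \tau^2)$ on $[s, s+h]$ converge to those of $u$; composed with the continuous map $\tilde f$ under the uniform convergence $x_\tau \to x$, the analogous statement holds for averages of $\tilde G_\tau$. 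Using continuity of $\tilde G_\tau$ on $[s, s+h]$, and the equi-boundedness of all the objects in play provided by $(A_1)$ and $(A_4)$ together with convexity of $U$, a pointwise attainment argument (interval-average estimate promoted to a pointwise estimate) then locates $s_\tau \in [s, s+h]$ at which each of the three control values and $\tilde G_\tau(s_\tau)$ lies within $O(\varepsilon)$ of its target. A diagonal extraction in $\varepsilon \to 0$ produces the required $s_\tau \to s$.

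For the second alternative of $(C_3)$, the preceding subsection has already upgraded the weak $L^2$-convergence $u_\tau \rightharpoonup u$ to strong $L^1$-convergence via Visintin's theorem, yielding (along a subsequence) pointwise a.e. convergence $u_\tau \to u$ on $[-\Delta, t_f]$. By Remark \ref{Ch6_RemarkShift}, the same a.e. convergence holds for both shifts $u_\tau(\cdot - \tau^2)$ and $u_\tau(\cdot + \tau^2)$. Since almost every point of a measurable function is a Lebesgue point, a routine Egorov-based selection produces $s_\tau \to s$ that are simultaneously Lebesgue points of $u_\tau$, $u_\tau(\cdot - \tau^2)$, and $u_\tau(\cdot + \tau^2)$, and along which the three pointwise values converge to $u(s)$.

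The main obstacle is the continuous case: weak $L^2$-convergence combined with continuity of both $u_\tau$ and $u$ is, a priori, strictly weaker than pointwise convergence, and converting an interval-average estimate into a pointwise estimate that simultaneously controls $u_\tau(s_\tau)$, $u_\tau(s_\tau - \tau^2)$, $u_\tau(s_\tau + \tau^2)$ is the delicate step. One must carefully exploit the interplay between the Lebesgue averaging on a shrinking interval, the continuity of $u_\tau$ provided by $(B_1)$ or $(C_3)$, and the uniform bounds from $(A_1)$--$(A_4)$. The bang-bang subcase, by contrast, is essentially bookkeeping once the Visintin-based strong $L^1$-convergence is in hand.
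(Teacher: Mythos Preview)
Your overall strategy matches the paper's: split into the continuous and the bang-bang subcases, use weak convergence plus the continuity from $(B_1)$/$(C_3)$ to promote an interval-average estimate to a pointwise one in the first subcase, and use the Visintin-based a.e.\ convergence (and Remark~\ref{Ch6_RemarkShift}) in the second. The paper frames the continuous case as a contradiction argument---assume no good $s_\tau$ exists on $[s,s+\alpha]$, then contradict the convergence of averages via the mean value theorem applied to the continuous $h^\tau$---while you argue directly; that difference is cosmetic.

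There is, however, a genuine gap in your treatment of Part~1. For pure state delays with general nonlinear $\tilde f$, Section~\ref{Ch6_SectionConvTraj} does \emph{not} establish $u_\tau \rightharpoonup u$ in $L^2$; that conclusion is only obtained in the control-affine setting. What \emph{is} established for pure state delays is the weak-star $L^\infty$ convergence of the composed map $h^\tau(\cdot) = \tilde f(\cdot,\cdot-\tau^0,x_\tau(\cdot),x_\tau(\cdot-\tau^1),u_\tau(\cdot))$ itself, via the Filippov argument under $(B_2)$. Your route---weak $L^2$ convergence of $u_\tau$ ``composed with the continuous map $\tilde f$''---fails on both counts: the premise is unavailable, and even if it were, weak convergence of controls is not preserved under composition with a nonlinear map, so it would not deliver the convergence of $\frac{1}{h}\int_s^{s+h}\tilde G_\tau$. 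The fix is simply to use the already-established weak-star convergence of $h^\tau$ directly, which gives the convergence of its averages against $\mathds{1}_{[s,s+\alpha]}$ immediately; this is exactly what the paper does.

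A smaller point: your ``pointwise attainment argument'' is left vague precisely where it is delicate. The scalar mean value theorem applied componentwise to the continuous vector-valued $h^\tau$ yields different points $t_{\tau,i}$ for different coordinates $i$; it does not by itself produce a single $s_\tau$ at which the full vector $h^\tau(s_\tau)$ (or, in Part~2, all three shifted controls simultaneously) is close to the target---invoking convexity of $U$ does not help here. The paper handles this by phrasing the contradiction coordinatewise (an index $i_k$ is fixed at the outset of the contradiction hypothesis) and then applying the scalar mean value theorem to that single component.
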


\begin{proof}[Proof of Lemma \ref{Ch6_Lemma1}] We start by proving the first assertion. For this, denote
$$
h^{\tau}(t) = (h^{\tau}_1(t),\dots,h^{\tau}_{n+1}(t)) = \tilde f(t,t-\tau^0,x_{\tau}(t),x_{\tau}(t-\tau^1),u_{\tau}(t))
$$
$$
h(t) = (h_1(t),\dots,h_{n+1}(t)) = \tilde f(t,t,x(t),x(t),u(t)) .
$$
We prove that, for $s \in (0,t_f)$ Lebesgue point of $h(\cdot)$, for every $\beta > 0$, $\alpha_s > 0$ (such that $s+\alpha_s<t_f$), there exists $\gamma_{s,\alpha_s,\beta} > 0$ such that, for every $(\tau^0,\tau^1) \in (0,\gamma_{s,\alpha_s,\beta})^2$, there exists $s_{\tau} \in [s,s+\alpha_s]$ Lebesgue point of $h^{\tau}(\cdot)$ for which $\| h^{\tau}(s_{\tau}) - h(s) \| < \beta$. By contradiction, suppose that there exists $s \in (0,t_f)$ Lebesgue point of $h(\cdot)$, $\alpha_{s} > 0$, $\beta > 0$ such that, for every integer $k$, there exists $\tau_k \in (0,1/k)^2 \times \{ 0 \}$ and $i_k \in \{ 1,\dots,n+1 \}$ for which, for $t \in [s,s+\alpha_{s}]$ Lebesgue point of $h^{\tau_k}(\cdot)$, it holds $|h^{\tau_k}_{i_k}(t) - h_{i_k}(s)| \geqslant \beta$.

From the previous results, the family $(h^{\tau}(\cdot))_{\tau \in (0,\varepsilon_0)^2 \times \{ 0 \}}$ converges to $h(\cdot)$ in $L^{\infty}$ for the weak star topology. Therefore, for every $0 < \delta \leqslant 1$, there exists an integer $k_{\delta}$ such that, for every $k \geqslant k_{\delta}$, it holds
$$
\frac{1}{\delta \alpha_s} \Big| \int_s^{s+\delta \alpha_s} h^{\tau_k}_i(t) \ \mathrm{d}t - \int_s^{s+\delta \alpha_s} h_i(t) \ \mathrm{d}t \Big| < \frac{\beta}{3} .
$$
for every $i \in \{1,\dots,n+1\}$. We exploit this fact to bound $|h^{\tau_k}_{i_k}(t) - h_{i_k}(s)|$ by $\beta$.

Firstly, since $s$ is a Lebesgue point of $h(\cdot)$, there exists $0 < \delta_{s,\alpha_s} \leqslant 1$ such that
$$
\Big| h_i(s) - \frac{1}{\delta_{s,\alpha_s} \alpha_s} \int_s^{s+\delta_{s,\alpha_s} \alpha_s} h_i(t) \ \mathrm{d}t \Big| < \frac{\beta}{3}
$$
for $i \in \{1,\dots,n+1\}$. On the other hand, there exists an integer $k_{\delta_{s,\alpha_s}}$ such that
$$
\frac{1}{\delta_{s,\alpha_s} \alpha_s} \Big| \int_s^{s+\delta_{s,\alpha_s} \alpha_s} h^{\tau_k}_i(t) \ \mathrm{d}t - \int_s^{s+\delta_{s,\alpha_s} \alpha_s} h_i(t) \ \mathrm{d}t \Big| < \frac{\beta}{3}
$$
for every $k \geqslant k_{\delta_{s,\alpha_s}}$ and every $i \in \{1,\dots,n+1\}$. Finally, by assumption, we have that $h^{\tau}(\cdot)$ is continuous for $\tau^0 , \tau^1 > 0$, and then, for every $k \geqslant k_{\delta_{s,\alpha_s}}$ and every $i \in \{1,\dots,n+1\}$, there exists $t_{k,i} \in [s,s+\delta_{s,\alpha_s} \alpha_s] \subseteq [s,s+\alpha_s]$ such that
$$
\Big| h^{\tau_k}_i(t_{k,i}) - \frac{1}{\delta_{s,\alpha_s} \alpha_s} \int_s^{s+\delta_{s,\alpha_s} \alpha_s} h^{\tau_k}_i(t) \ \mathrm{d}t \Big| < \frac{\beta}{3} .
$$
Then, for every $\tau_k \in \left(0,\frac{1}{k_{\delta_{s,\alpha_s}}}\right)^2 \times \{ 0 \}$, $i \in \{ 1,\dots,n+1 \}$ there exists $t_{k,i} \in [s,s+\alpha_s]$ Lebesgue point of $h^{\tau_k}(\cdot)$ such that $|h^{\tau_k}_i(t_{k,i}) - h_i(s)| < \beta$, which is a contradiction.

Now, we consider the second statement. The case for which Assumption $(C_3)$ ensures that, for every delay $\tau$, every optimal control $u_{\tau}(\cdot)$ of (\textbf{OCP})$_{\tau}$ is continuous, is treated as above because of the weak convergence in $L^2$ of $u_{\tau}(\cdot)$, of $u_{\tau}(\cdot-\tau^2)$ and of $u_{\tau}(\cdot+\tau^2)$. Therefore, assume that $u(\cdot)$ takes its values at extremal points of $U$, almost everywhere in $[-\Delta,t_f]$. Without loss of generality, we extend $u_{\tau}(\cdot)$ by some constant vector of $U$ in $[t^{\tau}_f,b]$. Denote
$$
h^{\tau}(t) = (h^{\tau}_1(t),\dots,h^{\tau}_{3m}(t)) = \Big( u_{\tau}(t),u_{\tau}(t-\tau^2),u_{\tau}(t+\tau^2) \Big)
$$
$$
h(t) = (h_1(t),\dots,h_{3m}(t)) = \Big( u(t),u(t),u(t) \Big)
$$
and fix $s \in (0,t_f)$, Lebesgue point of $h(\cdot)$. By contradiction, suppose that there exist $\beta > 0$ and $\alpha > 0$ such that, for every integer $k$, there exist $\tau_k = (\tau^0_k,\tau^1_k,\tau^2_k) \in (0,1/k)^3$ and $i_k \in \{ 1,\dots,3m \}$ for which, for every $r \in [s,s+\alpha]$ Lebesgue point of $h^{\tau_k}(\cdot)$, it holds $| h^{\tau_k}_{i_k}(r) - h_{i_k}(s) | \geqslant \beta$. From the arguments of the previous sections, up to some extension, the family of controls $(u_{\tau}(\cdot))_{\tau \in (0,\varepsilon_0)^3}$ converges to $u(\cdot)$ almost everywhere in $[0,t_f]$ and the same holds true for $(u_{\tau}(\cdot-\tau^2))_{\tau \in (0,\varepsilon_0)^3}$ and $(u_{\tau}(\cdot+\tau^2))_{\tau \in (0,\varepsilon_0)^3}$, thanks to Remark \ref{Ch6_RemarkShift}. Then, $(h^{\tau_k}_{i}(\cdot))_{k \in \mathbb{N}}$ converges a.e. to $h_i(\cdot)$, raising a contradiction.
\end{proof}

Lemma \ref{Ch6_Lemma1} allows to prove the following property for Pontryagin cones.

\begin{lemma} \label{Ch6_Prop1}
For every $\tilde v \in \tilde K^0(t_f)$ and every $\tau = (\tau^0,\tau^1,\tau^2) \in (0,\varepsilon_0 )^3$ (as well as $\tau = (\tau^0,\tau^1,0) \in (0,\varepsilon_0)^2 \times \{ 0 \}$ in the case of pure state delays), there exists $\tilde w_{\tau} \in \tilde K^{\tau}(t^{\tau}_f)$ such that the family $(\tilde w_{\tau})_{\tau \in (0,\varepsilon_0 )^3}$ converges to $\tilde v$ as $\tau$ tends to 0.
\end{lemma}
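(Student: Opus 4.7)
The plan is to reduce the statement to the generators of $\tilde K^0(t_f)$ and then pass to the limit on each generator. Since by Remark~\ref{Ch6_RemarkVariations} the cone $\tilde K^0(t_f)$ is the smallest closed convex cone containing the elementary variation vectors $\tilde v^0_{s,\omega_z(s)}(t_f)$, every $\tilde v \in \tilde K^0(t_f)$ can be approximated with arbitrary accuracy by finite nonnegative combinations $\sum_{i=1}^N \lambda_i \tilde v^0_{s_i,\omega_{z_i}(s_i)}(t_f)$ with $z_i \in U$ and $s_i \in (0,t_f)$ Lebesgue points of $u(\cdot)$. Convex cones are stable under nonnegative combinations, and the statement is preserved under closure, so a standard diagonal extraction shows that it suffices to produce, for each single generator $\tilde v^0_{s,\omega_z(s)}(t_f)$, a family $\tilde w_\tau \in \tilde K^\tau(t_f^\tau)$ with $\tilde w_\tau \to \tilde v^0_{s,\omega_z(s)}(t_f)$ as $\tau \to 0$.

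Given such a generator, I would invoke Lemma~\ref{Ch6_Lemma1} to obtain a family $s_\tau \to s$ of Lebesgue points of $u_\tau(\cdot)$, $u_\tau(\cdot-\tau^2)$ and $u_\tau(\cdot+\tau^2)$ (respectively, of $\tilde f(\cdot,\cdot-\tau^0,x_\tau(\cdot),x_\tau(\cdot-\tau^1),u_\tau(\cdot))$ in the pure state delay case). The natural candidate is then
$$
\tilde w_\tau \;=\; \tilde w^\tau_{s_\tau,z}(t_f^\tau) \;=\; \tilde v^\tau_{s_\tau,\omega^-_z(s_\tau)}(t_f^\tau) + \tilde v^\tau_{s_\tau+\tau^2,\omega^+_z(s_\tau)}(t_f^\tau) \;\in\; \tilde K^\tau(t_f^\tau),
$$
by Definition~\ref{Ch6_DefCone}. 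Combining Lemma~\ref{Ch6_Lemma1} with the uniform convergence $x_\tau \to x$ and $t_f^\tau \to t_f$ proved in Section~\ref{Ch6_SectionConvTraj}, one checks that $\omega^-_z(s_\tau) \to \omega^-$ and $\omega^+_z(s_\tau) \to \omega^+$, and the sum $\omega^- + \omega^+$ coincides with $\omega_z(s)$: trivially when there are pure state delays (since then $\omega^+ \equiv 0$ and $\omega^-_z(s_\tau) \to \omega_z(s)$), and by the algebraic identity $\tilde f(s,s,x,x,z,u) + \tilde f(s,s,x,x,u,z) = \tilde f(s,s,x,x,z,z) + \tilde f(s,s,x,x,u,u)$ in the affine case $(C_1)$ for $u = u(s)$.

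The decisive step is then to prove that the solutions $\tilde v^\tau_{s_\tau,\xi_\tau}$ of the linear delayed Cauchy problem~(\ref{Ch6_DynVariation}) converge uniformly on $[0,t_f]$ to the non-delayed solution $\tilde v^0_{s,\xi}$ whenever $s_\tau \to s$ and $\xi_\tau \to \xi$. The coefficient matrices $\frac{\partial \tilde f}{\partial x}$ and $\frac{\partial \tilde f}{\partial y}$ evaluated along $(t,t-\tau^0,x_\tau(t),x_\tau(t-\tau^1),u_\tau(t),u_\tau(t-\tau^2))$ converge to their non-delayed counterparts along $(t,t,x(t),x(t),u(t),u(t))$ weakly-star in $L^\infty$, by~(\ref{Ch6_DerDynConv}) in the pure state delay case and by the almost-everywhere convergence of controls (available under $(C_3)$, \emph{cf.}\ Remark~\ref{Ch6_RemarkShift}) in the control delay case. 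Writing~(\ref{Ch6_DynVariation}) in integral form, the initial interval $(s_\tau - \tau^1,s_\tau)$ on which $\psi$ vanishes contributes only an $o(1)$ correction since $\tau^1 \to 0$, while the weak-star convergence of the coefficients suffices to pass to the limit in the integral equation because $\tilde v^\tau_{s_\tau,\xi_\tau}(\cdot)$ is uniformly bounded on $[0,t_f]$ (by a Gronwall estimate uniform in $\tau$, guaranteed by $(A_4)$ and the boundedness of the coefficients). A standard Gronwall argument then upgrades this to uniform convergence on $[0,t_f]$, and evaluating at $t_f^\tau \to t_f$ yields $\tilde w_\tau \to \tilde v^0_{s,\omega_z(s)}(t_f)$.

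The main obstacle is precisely this last passage to the limit in the delayed linear ODE: the coefficients converge only \emph{weakly}, while one needs \emph{uniform} convergence of the solutions, and the shifting initial time $s_\tau$ together with the shifted second variation vector $\tilde v^\tau_{s_\tau+\tau^2,\omega^+_z(s_\tau)}$ must be handled simultaneously. This is the technical heart of the argument; once carried out on a single generator, extending to arbitrary $\tilde v \in \tilde K^0(t_f)$ follows from a routine convex combination and diagonal extraction, completing the proof.
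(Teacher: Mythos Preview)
Your proposal is correct and follows essentially the same route as the paper: reduce to a single generator $\tilde v^0_{s,\omega_z(s)}(t_f)$, invoke Lemma~\ref{Ch6_Lemma1} to produce Lebesgue points $s_\tau\to s$, take $\tilde w_\tau=\tilde w^\tau_{s_\tau,z}(t_f^\tau)\in\tilde K^\tau(t_f^\tau)$, use the affine identity (or the vanishing of $\omega^+$ in the pure state delay case) to get $\omega^-_z(s_\tau)+\omega^+_z(s_\tau)\to\omega_z(s)$, and pass to the limit in the linear delayed variational system via weak convergence of the coefficients and a Gronwall/compactness argument. The paper's extension to arbitrary $\tilde v\in\tilde K^0(t_f)$ is terser than your diagonal extraction (it just says to approximate boundary points by interior ones), but the content is identical.

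One small imprecision worth correcting: you justify the weak convergence of $\partial\tilde f/\partial x$, $\partial\tilde f/\partial y$ in the control-delay case ``by the almost-everywhere convergence of controls (available under $(C_3)$)''. Almost-everywhere convergence of $u_\tau$ is established only under the \emph{second} alternative of $(C_3)$ (when $u$ is extremal-valued); under the first alternative (continuity of each $u_\tau$) Section~\ref{Ch6_SectionConvTraj} gives only weak $L^2$ convergence of $u_\tau$ and $u_\tau(\cdot-\tau^2)$. This still suffices, because under $(C_1)$ the matrices $\partial\tilde f/\partial x$ and $\partial\tilde f/\partial y$ are affine in $(u,v)$, so weak $L^2$ convergence of the controls together with uniform convergence of $x_\tau$ already yields weak $L^2$ convergence of the coefficient matrices---which is precisely what the paper invokes and what your limiting argument actually needs.
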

\begin{proof}[Proof of Lemma \ref{Ch6_Prop1}] We prove the statement for problems (\textbf{OCP})$_{\tau}$ with general state and control delays $\tau = (\tau^0,\tau^1,\tau^2)$. If pure state delay problems (\textbf{OCP})$_{\tau}$ are considered, the same guideline can be employed by using Lemma \ref{Ch6_Lemma1} and (\ref{Ch6_DerDynConv}).

Suppose first that $\tilde v = \tilde v^0_{s,\omega_z(s)}(t_f)$, where $z \in U$ and $0<s<t_f$ is a Lebesgue point of $u(\cdot)$ (recall Remark \ref{Ch6_RemarkVariations}). By definition, $\tilde v^0_{s,\omega_z(s)}(\cdot)$ is the solution of
\begin{eqnarray} \label{Ch6_SysProp1}
\hspace{10pt} \begin{cases}
\dot{\psi}(t) = \displaystyle \bigg( \frac{\partial \tilde f}{\partial x}(t,t,x(t),x(t),u(t),u(t)) + \frac{\partial \tilde f}{\partial y}(t,t,x(t),x(t),u(t),u(t)) \bigg) \psi(t) \medskip \\
\psi(s) = \tilde f(s,s,x(s),x(s),z,z) - \tilde f(s,s,x(s),x(s),u(s),u(s))
\end{cases} .
\end{eqnarray}
From Lemma \ref{Ch6_Lemma1}, there exists a family $(s_{\tau})_{\tau \in (0,\varepsilon_0)^3} \subseteq [s,t_f)$, which are Lebesgue points of $u_{\tau}(\cdot)$, of $u_{\tau}(\cdot-\tau^2)$ and of $u_{\tau}(\cdot+\tau^2)$, such that
$$
u_{\tau}(s_{\tau}) \xrightarrow{\tau \rightarrow 0} u(s) \ , \ u_{\tau}(s_{\tau}-\tau^2) \xrightarrow{\tau \rightarrow 0} u(s) \ , \ u_{\tau}(s_{\tau}+\tau^2) \xrightarrow{\tau \rightarrow 0} u(s) \ , \ s_{\tau} \xrightarrow{\tau \rightarrow 0} s .
$$
This allows to consider $\tilde v^{\tau}_{s_{\tau},\omega^-_z(s_{\tau})}(\cdot)$ and $\tilde v^{\tau}_{s_{\tau}+\tau^2,\omega^+_z(s_{\tau})}(\cdot)$, solutions of (\ref{Ch6_DynVariation}) with initial data provided respectively by (\ref{Ch6_OmegaMinus}) and (\ref{Ch6_OmegaPlus}). We denote
$$
\tilde w^{\tau}_{s_{\tau},z}(t) = \tilde v^{\tau}_{s_{\tau},\omega^-_z(s_{\tau})}(t) + \tilde v^{\tau}_{s_{\tau}+\tau^2,\omega^+_z(s_{\tau})}(t) .
$$
Since we consider affine problems (\textbf{OCP})$_{\tau}$, Lemma \ref{Ch6_Lemma1} gives
$$
\lim_{\tau \rightarrow 0} \Big( \omega^-_z(s_{\tau}) + \omega^+_z(s_{\tau}) \Big) = \tilde f(s,s,x(s),x(s),z,z) - \tilde f(s,s,x(s),x(s),u(s),u(s)) .
$$
Moreover, from the results of the previous sections, we have in particular
$$
\frac{\partial \tilde f}{\partial x}(\cdot,\cdot-\tau^0,x_{\tau}(\cdot),x_{\tau}(\cdot-\tau^1),u_{\tau}(\cdot),u_{\tau}(\cdot-\tau^2)) \overset{L^2}{\rightharpoonup} \frac{\partial \tilde f}{\partial x}(\cdot,\cdot,x(\cdot),x(\cdot),u(\cdot),u(\cdot))
$$
$$
\frac{\partial \tilde f}{\partial y}(\cdot,\cdot-\tau^0,x_{\tau}(\cdot),x_{\tau}(\cdot-\tau^1),u_{\tau}(\cdot),u_{\tau}(\cdot-\tau^2)) \overset{L^2}{\rightharpoonup} \frac{\partial \tilde f}{\partial y}(\cdot,\cdot,x(\cdot),x(\cdot),u(\cdot),u(\cdot)) .
$$
By continuous dependence w.r.t initial data for dynamical systems and since $t^{\tau}_f$ converges to $t_f$, the family $(\tilde w_{\tau})_{\tau \in (0,\varepsilon_0 )^3} = (\tilde w^{\tau}_{s_{\tau},z}(t^{\tau}_f))_{\tau \in (0,\varepsilon_0 )^3}$ converges to $\tilde v$ as $\tau \rightarrow 0$.

If $\tilde v \in \partial \tilde K^0(t_f)$, the result above is used on converging sequences in $\textnormal{Int} \ \tilde K^0(t_f)$.
\end{proof}

For the last part of the proof, an iterative use of Lemma \ref{Ch6_Prop1} is done. It is at this step that, for problems with general delays $\tau = (\tau^0,\tau^1,\tau^2)$, Assumption $(C_2)$ of fixed final time becomes instrumental to derive the convergence related to adjoint vectors. Indeed, problems arise when one tries to make the final condition on the Hamiltonian (\ref{Ch5_FinalCond}) converge to the transversality condition related to problem (\textbf{OCP})$_{\tau}$. For sake of concision, in this context, we focus only on problems (\textbf{OCP})$_{\tau}$ with general delays $\tau = (\tau^0,\tau^1,\tau^2)$. The case of pure state delays is similar (we refer to \cite[Proposition 2.15]{haberkorn2011} for details). Assumptions $(B)$ and $(C)$ are implicitly used.

We first prove that the following statements hold true:
\begin{itemize}
\item For every $\tau = (\tau^0,\tau^1,\tau^2) \in (0,\varepsilon_0)^3$, every extremal lift $(x_{\tau}(\cdot),p_{\tau}(\cdot),p^0_{\tau},u_{\tau}(\cdot))$ of any solution of (\textbf{OCP})$_{\tau}$ is normal.
\item The set of final adjoint vectors $\{ p_{\tau}(t_f) : \tau \in (0,\varepsilon_0)^3 \}$ is bounded.
\end{itemize}

We consider the first statement proceeding by contradiction. Assume that, for every integer $k$, there exist $\tau_k = (\tau^0_k,\tau^1_k,\tau^2_k) \in (0,1/k)^3$ and a solution $(x_{\tau_k}(\cdot),u_{\tau_k}(\cdot))$ of (\textbf{OCP})$_{\tau_k}$ having an abnormal extremal lift $(x_{\tau_k}(\cdot),p_{\tau_k}(\cdot),0,u_{\tau_k}(\cdot))$. Set $\displaystyle \psi_{\tau_k} = \frac{p_{\tau_k}(t_f)}{\| p_{\tau_k}(t_f) \|}$ for every integer $k$. Therefore, we have $\Big\langle (\psi_{\tau_k},0) , \tilde v_{\tau_k} \Big\rangle \leqslant 0$, for every $\tilde v_{\tau_k} \in \tilde K^{\tau_k}(t_f)$ and every integer $k$. Up to a subsequence, the sequence $(\psi_{\tau_k})_{k \in \mathbb{N}} \subseteq S^{n-1}$ converges to some unit vector $\psi \in \mathbb{R}^n$. Passing to the limit, by using the previous results, we infer that $\Big\langle (\psi,0) , \tilde v \Big\rangle \leqslant 0$ for every $\tilde v \in \tilde K^0(t_f)$. Thanks to Assumption $(C_2)$, $(x(\cdot),u(\cdot))$ has an abnormal extremal lift. This contradicts Assumption $(A_3)$.

For the second statement, again by contradiction, assume that there exists a sequence $(\tau_k = (\tau^0_k,\tau^1_k,\tau^2_k))_{k \in \mathbb{N}} \subseteq (0,\varepsilon_0)^3$ converging to 0 such that $\| p_{\tau_k}(t_f) \|$ tends to $+\infty$. As defined above, the sequence $\displaystyle \left( \psi_{\tau_k} \right)_{k \in \mathbb{N}}$ belongs to $S^{n-1}$, and then, up to some subsequence, it converges to some unit vector $\psi$. On the other hand, by construction, the inequality $\Big\langle (p_{\tau_k}(t_f),-1) , \tilde v_{\tau_k} \Big\rangle \leqslant 0$ holds for every $\tilde v_{\tau_k} \in \tilde K^{\tau_k}(t_f)$ and every integer $k$. Dividing by $\| p_{\tau_k}(t_f) \|$ and passing to the limit, it follows that the solution $(x(\cdot),u(\cdot))$ has an abnormal extremal lift, which again contradicts Assumption $(A_3)$. \\

Now, let $\psi$ be a closure point of $\{ p_{\tau}(t_f) : \tau \in (0,\varepsilon_0)^3 \}$ and $(\tau_k = (\tau^0_k,\tau^1_k,\tau^2_k))_{k \in \mathbb{N}} \subseteq (0,\varepsilon_0)^3$ be a sequence converging to 0 such that $p_{\tau_k}(t_f)$ tends to $\psi$.  Using the continuous dependence w.r.t. initial data and the established convergence properties, we infer that $(p_{\tau_k}(\cdot))_{k \in \mathbb{N}}$ converges uniformly to the solution $z(\cdot)$ of
\begingroup
\small
$$
\displaystyle \dot{z}(t) = -\frac{\partial H}{\partial x}(t,t,x(t),x(t),z(t),-1,u(t),u(t)) - \frac{\partial H}{\partial y}(t,t,x(t),x(t),z(t),-1,u(t),u(t))
$$
\endgroup
 with $z(t_f) = \psi$. Moreover, since $\Big\langle (p_{\tau_k}(t_f),-1) , \tilde v_{\tau_k} \Big\rangle \leqslant 0$, for every $\tilde v_{\tau_k} \in \tilde K^{\tau_k}(t_f)$ and every integer $k$, passing to the limit, thanks to the previous results, we obtain $\Big\langle (\psi,-1) , \tilde v \Big\rangle \leqslant 0$, for every $\tilde v \in \tilde K^0(t_f)$. It follows that $(x(\cdot),z(\cdot),-1,u(\cdot))$ is a normal extremal lift of (\textbf{OCP}). Using Assumption $(A_3)$, we finally obtain $z(\cdot) = p(\cdot)$ in $[0,t_f]$.

Theorem \ref{Ch5_TheoMain} is proved.

\section{Conclusions and Perspectives} \label{Sect_Conclus}

In this paper, we provided sufficient conditions under which Pontryagin extremals related to nonlinear optimal control problems with delays are continuous (under appropriate topologies) with respect to delays.

It would be interesting to extend this result to problems with more general constraints,
such as, control and state constraints. This would require to analyze the proof of the Maximum Principle with state and control constraints via sliding or v-variations (see, e.g., \cite{dmitruk2009}), to exploit the continuous dependence with respect to parameters for implicit function theorems (Ekeland-type approaches probably fail because of continuous dependence). Furthermore, in the case of control and state delays, the proof that we provided needs to consider control-affine dynamics and costs, and fixed final time. The extension to more general systems is open.

Furthermore, in this paper we have considered constant delays. One could consider more general delays that are functions of time and state. This is motivated by the fact that, in the case of delays depending on the time and the state, Maximum Principle formulations still exist (see, e.g., \cite{asher1971}). Therefore, extending our main result requires to consider the $C^0$-topology on the delay function $t \mapsto \tau(t,x(t))$.

Finally, as shortly explained in Section \ref{Sect_IndirectMethods}, our result is in particular motivated by numerical implementations of the shooting method, in combination with homotopies on the delay parameters, thus providing an interesting alternative to classically used direct methods. Numerical issues will be addressed in a forthcoming paper.

\bibliographystyle{unsrt}
\bibliography{references}

\appendix

\section{Proof of of Lemma \ref{Ch6_LemmaNeedleLike}} \label{appendixProof}

The proof goes by induction. We develop computations for $j = 1$. The inductive step goes in the same way, as the usual case (see, e.g., \cite{pontryagin1987}).

Let $t_j < t \leqslant t^{\tau}_f$ be a Lebesgue point of $u_{\tau}(\cdot)$, $u_{\tau}(\cdot-\tau^2)$. First, let us show that
\begingroup
\small
\begin{equation} \label{Ch6_ExprProofNeedle}
\tilde x^{\pi}_{\tau}(t) - \tilde x_{\tau}(t) = \eta_1 \tilde w^{\tau}_{t_1,u_1}(t) + o (\eta_1) = \eta_1 \Big( \tilde v^{\tau}_{t_1,\omega^-_{u_1}(t_1)}(t) + \tilde v^{\tau}_{t_1+\tau^2,\omega^+_{u_1}(t_1)}(t) \Big) + o (\eta_1) .
\end{equation}
\endgroup
We consider the case $t \geqslant t_1 + \tau_2$ (the case $t < t_1 + \tau_2$ is similar, but easier). We have
\begingroup
\footnotesize
$$
\hspace{-50pt} \| \tilde x^{\pi}_{\tau}(t) - \tilde x_{\tau}(t) - \eta_1 \tilde w^{\tau}_{t_1,u_1}(t) \| \leqslant \| \tilde x^{\pi}_{\tau}(t_1+\tau^2) - \tilde x_{\tau}(t_1+\tau^2) - \eta_1 \tilde w^{\tau}_{t_1,u_1}(t_1+\tau^2) \|
$$
$$
\hspace{-120pt} + \bigg\| \int^{t}_{t_1+\tau_2} \Big( \tilde f(s,s-\tau^0,\tilde x^{\pi}_{\tau}(s),\tilde x^{\pi}_{\tau}(s-\tau^1),u_{\tau}(s),u_{\tau}(s-\tau^2))
$$
$$
\hspace{85pt} - \tilde f(s,s-\tau^0,\tilde x_{\tau}(s),\tilde x_{\tau}(s-\tau^1),u_{\tau}(s),u_{\tau}(s-\tau^2)) - \eta_1 \dot{\tilde{w}}^{\tau}_{t_1,u_1}(s) \Big) ds \bigg\| \ .
$$
\endgroup
By exploiting the facts that $t_1$ is a Lebesgue point of $u_{\tau}(\cdot)$, $u_{\tau}(\cdot-\tau^2)$ and of $u_{\tau}(\cdot+\tau^2)$, and that $\tilde x^{\pi}_{\tau}(\cdot)$ converges uniformly to $\tilde x_{\tau}(\cdot)$, expanding the extended dynamics at second order, the first term of the expression above can be bounded as follows:
\begingroup
\footnotesize
$$
\hspace{-190pt} \| \tilde x^{\pi}_{\tau}(t_1+\tau^2) - \tilde x_{\tau}(t_1+\tau^2) - \eta_1 \tilde w^{\tau}_{t_1,u_1}(t_1+\tau^2) \|
$$
$$
\hspace{-150pt} \leqslant \bigg\| \int^{t_1}_{t_1-\eta_1} \Big( \tilde f(s,s-\tau^0,\tilde x^{\pi}_{\tau}(s),\tilde x^{\pi}_{\tau}(s-\tau^1),u_1,u_{\tau}(s-\tau^2))
$$
$$
\hspace{125pt} - \tilde f(s,s-\tau^0,\tilde x_{\tau}(s),\tilde x_{\tau}(s-\tau^1),u_{\tau}(s),u_{\tau}(s-\tau^2)) \Big) ds - \eta_1 \omega^-_{u_1}(t_1) \bigg\|
$$
$$
\hspace{-107pt} + \bigg\| \int^{t_1+\tau^2-\eta_1}_{t_1} \Big( \tilde f(s,s-\tau^0,\tilde x^{\pi}_{\tau}(s),\tilde x^{\pi}_{\tau}(s-\tau^1),u_{\tau}(s),u_{\tau}(s-\tau^2))
$$
$$
\hspace{125pt} - \tilde f(s,s-\tau^0,\tilde x_{\tau}(s),\tilde x_{\tau}(s-\tau^1),u_{\tau}(s),u_{\tau}(s-\tau^2)) - \eta_1 \dot{\tilde{w}}^{\tau}_{t_1,u_1}(s) \Big) ds \bigg\|
$$
$$
\hspace{-140pt} + \bigg\| \int^{t_1+\tau^2}_{t_1+\tau^2-\eta_1} \Big( \tilde f(s,s-\tau^0,\tilde x^{\pi}_{\tau}(s),\tilde x^{\pi}_{\tau}(s-\tau^1),u_{\tau}(s),u_1)
$$
$$
\hspace{85pt} - \tilde f(s,s-\tau^0,\tilde x_{\tau}(s),\tilde x_{\tau}(s-\tau^1),u_{\tau}(s),u_{\tau}(s-\tau^2)) - \eta_1 \dot{\tilde{w}}^{\tau}_{t_1,u_1}(s) \Big) ds - \eta_1 \omega^+_{u_1}(t_1) \bigg\|
$$
\endgroup
\begingroup
\footnotesize
$$
\hspace{-150pt} \leqslant \bigg\| \int^{t_1}_{t_1-\eta_1} \Big( \tilde f(s,s-\tau^0,\tilde x_{\tau}(s),\tilde x_{\tau}(s-\tau^1),u_1,u_{\tau}(s-\tau^2))
$$
$$
\hspace{100pt} - \tilde f(s,s-\tau^0,\tilde x_{\tau}(s),\tilde x_{\tau}(s-\tau^1),u_{\tau}(s),u_{\tau}(s-\tau^2)) \Big) ds - \eta_1 \omega^-_{u_1}(t_1) \bigg\| + o (\eta_1)
$$
$$
+ \int^{t_1+\tau^2-\eta_1}_{t_1} \bigg\| \frac{\partial \tilde{f}}{\partial x}(s,s-\tau^0,\tilde x_{\tau}(s),\tilde x_{\tau}(s-\tau^1),u_{\tau}(s),u_{\tau}(s-\tau^2)) \cdot \Big(\tilde x^{\pi}_{\tau}(s) - \tilde x_{\tau}(s) - \eta_1 \tilde w^{\tau}_{t_1,u_1}(s)\Big) \bigg\| ds
$$
$$
+ \int^{t_1+\tau^2-\eta_1}_{t_1} \bigg\| \frac{\partial \tilde{f}}{\partial y}(s,s-\tau^0,\tilde x_{\tau}(s),\tilde x_{\tau}(s-\tau^1),u_{\tau}(s),u_{\tau}(s-\tau^2)) \cdot \Big(\tilde x^{\pi}_{\tau} - \tilde x_{\tau} - \eta_1 \tilde w^{\tau}_{t_1,u_1}\Big)(s-\tau^1) \bigg\| ds
$$
$$
+ \int^{t_1+\tau^2-\eta_1}_{t_1} \int_{0}^{1} \bigg\| d^2 \tilde f\big(s,s-\tau^0,(\sigma \tilde x_{\tau} + (1-\sigma ) \tilde x^{\pi}_{\tau})(s),(\sigma \tilde x_{\tau} + (1-\sigma ) \tilde x^{\pi}_{\tau})(s-\tau^1),u_{\tau}(s),u_{\tau}(s-\tau^2)\big) \bigg\| \cdot
$$
$$
\cdot \bigg( \| \tilde x^{\pi}_{\tau}(s) - \tilde x_{\tau}(s) \|^2 + \| \tilde x^{\pi}_{\tau}(s-\tau^1) - \tilde x_{\tau}(s-\tau^1) \|^2 + 2 \| \tilde x^{\pi}_{\tau}(s) - \tilde x_{\tau}(s) \| \| \tilde x^{\pi}_{\tau}(s-\tau^1) - \tilde x_{\tau}(s-\tau^1) \| \bigg)  d\sigma ds
$$
$$
\hspace{-5pt} + \eta_1 \bigg\| \int^{t_1}_{t_1-\eta_1} \dot{\tilde{w}}^{\tau}_{t_1,u_1}(s+\tau^2) ds \bigg\| + \bigg\| \int^{t_1}_{t_1-\eta_1} \Big( \tilde f(s+\tau^2,s+\tau^2-\tau^0,\tilde x_{\tau}(s+\tau^2),\tilde x_{\tau}(s+\tau^2-\tau^1),u_{\tau}(s+\tau^2),u_1)
$$
$$
\hspace{45pt} - \tilde f(s+\tau^2,s+\tau^2-\tau^0,\tilde x_{\tau}(s+\tau^2),\tilde x_{\tau}(s+\tau^2-\tau^1),u_{\tau}(s+\tau^2),u_{\tau}(s)) \Big) ds - \eta_1 \omega^+_{u_1}(t_1) \bigg\| + o (\eta_1) .
$$
\endgroup
Therefore, by bounding the derivatives of the extended dynamics, we have
\begingroup
\footnotesize
$$
\| \tilde x^{\pi}_{\tau}(t_1+\tau^2) - \tilde x_{\tau}(t_1+\tau^2) - \eta_1 \tilde w^{\tau}_{t_1,u_1}(t_1+\tau^2) \| \leqslant \tilde C_1 \int^{t_1+\tau^2-\eta_1}_{t_1-\tau^1} \| \tilde x^{\pi}_{\tau}(s) - \tilde x_{\tau}(s) - \eta_1 \tilde w^{\tau}_{t_1,u_1}(s) \| \; ds + o (\eta_1)
$$
\endgroup
where $\tilde C_1 \geqslant 0$ is a constant. With the same technique, we obtain
\begingroup
\footnotesize
$$
\hspace{-5pt} \bigg\| \int^{t}_{t_1+\tau_2} \Big( \tilde f(s,s-\tau^0,\tilde x^{\pi}_{\tau}(s),\tilde x^{\pi}_{\tau}(s-\tau^1),u_{\tau}(s),u_{\tau}(s-\tau^2)) - \tilde f(s,s-\tau^0,\tilde x_{\tau}(s),\tilde x_{\tau}(s-\tau^1),u_{\tau}(s),u_{\tau}(s-\tau^2))
$$
\endgroup
\begingroup
\small
$$
- \eta_1 \dot{\tilde{w}}^{\tau}_{t_1,u_1}(s) \Big) ds \bigg\| \leqslant \tilde C_2 \int^{t}_{t_1+\tau^2-\tau^1} \| \tilde x^{\pi}_{\tau}(s) - \tilde x_{\tau}(s) - \eta_1 \tilde w^{\tau}_{t_1,u_1}(s) \| \; ds + o (\eta_1)
$$
\endgroup
where $\tilde C_2 \geqslant 0$ is another constant. Coupling the two last results with the Gr\"onwall's inequality, (\ref{Ch6_ExprProofNeedle}) follows. The conclusion comes from (\ref{Ch6_ExprProofNeedle}) and the fact that $t$ is a Lebesgue point of $u_{\tau}(\cdot)$ and of $u_{\tau}(\cdot-\tau^2)$.

\section{Proof of Lemma \ref{Ch6_IFTP}}  \label{appendixProofSecond}

We start by recalling the following standard result (see, e.g., \cite{agrachev2013}).

Let $L : \mathbb{R}^j \rightarrow \mathbb{R}^n$ be a linear mapping such that $L(\mathbb{R}^j_+) = \mathbb{R}^n$. Then:
\begin{itemize}
\item $j > n + 1$ and $(0,+\infty)^j \cap \textnormal{ker } L$ is nontrivial.
\item There exists $S \subseteq{\mathbb{R}^j}$, dim$(S) = n$, such that $L|_S : S \rightarrow \mathbb{R}^n$ is an isomorphism.
\end{itemize}

Applying this result to $L = \frac{\partial F}{\partial x}(0,0)$ yields the existence of a nontrivial vector $v \in (0, +\infty)^j$, such that $L(v) = 0$, and of a $n$-dimensional subspace $S \subseteq \mathbb{R}^j$ such that the restriction $L|_S : S \rightarrow \mathbb{R}^n$ is an isomorphism.

For every $\varepsilon \in \mathbb{R}^k_+$ and every $y, u \in \mathbb{R}^n$, set $\Phi(\varepsilon,y,u) = u - F(\varepsilon,{L|_S}^{-1}(u)) + y$. This mapping is continuous and it holds $\Phi(0,0,0) = 0$. Fix $\varepsilon \in \mathbb{R}^k_+$ at which $F$ is almost everywhere strictly differentiable. Then, for every $y \in \mathbb{R}^n$, one has
\begingroup
\small
\begin{equation} \label{Ch6_TaylorDev}
\Phi(\varepsilon,y,u_1) - \Phi(\varepsilon,y,u_2) = \left( \textnormal{Id} - \frac{\partial F}{\partial x}(\varepsilon,0) \circ {L|_S}^{-1} \right) (u_1 - u_2) + \| u_1 - u_2 \| G_{\varepsilon}(u_1, u_2)
\end{equation}
\endgroup
where $G_{\varepsilon}(u_1,u_2) = g_{\varepsilon}({L|_S}^{-1}(u_2),{L|_S}^{-1}(u_1)) \rightarrow 0$ as soon as $(u_1,u_2) \xrightarrow{\text{a.e.}} 0$. From the continuity property of $\frac{\partial F}{\partial x}(\varepsilon,0)$ on a dense subset, there exists $\varepsilon_0 \in \mathbb{R}^k_+$ and a dense subset $E \subseteq [0,\varepsilon_0)^k$, such that for every $\varepsilon \in E$
$$
\left\| \textnormal{Id} - \frac{\partial F}{\partial x}(\varepsilon,0) \circ {L|_S}^{-1} \right\| \leqslant \frac{1}{4}
$$
and there exists $r_{\varepsilon} > 0$ such that
$$
\| G_{\varepsilon}(u_1, u_2) \| \leqslant \frac{1}{4} \quad \textnormal{for almost all} \quad u_1 \ , \ u_2 \in B_{r_{\varepsilon}}(0) .
$$
On the other hand, by assumption, the remainder in expression (\ref{Ch6_TaylorDev}) converges to 0 uniformly with respect to $\varepsilon$ on a dense subset. Therefore, up to reducing $E$, gathering the previous results with (\ref{Ch6_TaylorDev}), we infer the existence of $r > 0$ such that
\begingroup
\small
$$
\| \Phi(\varepsilon,y,u_1) - \Phi(\varepsilon,y,u_2) \| \leqslant \frac{1}{2} \| u_1 - u_2 \| \ , \ \textnormal{for every} \ \varepsilon \in E \ \textnormal{and almost every} \ u_1 , u_2 \in B_r(0) .
$$
\endgroup
From this last result and the continuity of mapping $F$, for every $\varepsilon \in [0,\varepsilon_0)^k$ and $y \in \mathbb{R}^n$, the mapping $u \mapsto \Phi(\varepsilon,y,u)$ is $\frac{1}{2}$-Lipschitzian on an open neighborhood of 0.

At this step, for every $\delta > 0$, denote $B_{\delta} = S \cap \overline B_{\delta}(0)$ and choose $\delta > 0$ small enough such that $v + B_{\delta} \subseteq (0,+\infty)^j$. The set $U_{\delta} = L(B_{\delta})$ is a closed neighborhood of 0 in $\mathbb{R}^n$.  With the same argument as above, it is not difficult to show that, if $\delta$, $\| \varepsilon \|$ and $\| y \|$ are small enough, then, the mapping $u \mapsto \Phi(\varepsilon,y,u)$ maps $U_{\delta}$ into itself.

Lemma \ref{Ch6_IFTP} follows from the application of the usual Banach fixed point theorem to the contraction mapping $u \mapsto \Phi(\varepsilon,y,u)$ with parameters $(\varepsilon,y)$.
\end{document}